\documentclass[12pt]{amsart}

\setlength{\textwidth}{15cm}
\setlength{\oddsidemargin}{1cm}
\setlength{\evensidemargin}{1cm}
\setlength{\textheight}{21cm}
\setlength{\parskip}{2mm}
\setlength{\parindent}{0em}
\setlength{\headsep}{1.5cm}
 
\usepackage{amsmath, amsthm, amssymb}
\usepackage{url} 
\usepackage{graphicx}
\usepackage{xcolor}
\usepackage{moreverb}

\usepackage{fancyvrb}

\def\r{\mathbb R}

\newtheorem{theorem}{Theorem}[section]
 \newtheorem{proposition}[theorem]{Proposition}
 \newtheorem{corollary}[theorem]{Corollary}
\theoremstyle{definition}
\newtheorem{definition}[theorem]{Definition}
\newtheorem{example}[theorem]{Example}
\newtheorem{remark}[theorem]{Remark}

\begin{document}

\title[Surfaces with constant sectional curvature]{Constant sectional curvature surfaces  with a semi-symmetric non-metric connection}

\author{Muhittin Evren Aydin}
\address{Department of Mathematics, Faculty of Science, Firat University, Elazig,  23200 Turkey}
\email{meaydin@firat.edu.tr}
\author{Rafael L\'opez}
\address{Departamento de Geometr\'{\i}a y Topolog\'{\i}a Universidad de Granada 18071 Granada, Spain}
\email{rcamino@ugr.es}
\author{ Adela Mihai}
 \address{Technical University of Civil Engineering Bucharest,
Department of Mathematics and Computer Science, 020396, Bucharest, Romania
and Transilvania University of Bra\c{s}ov, Interdisciplinary Doctoral
School, 500036, Bra\c{s}ov, Romania}
 \email{adela.mihai@utcb.ro, adela.mihai@unitbv.ro}

\keywords{rotational surface; sectional curvature; semi-symmetric connection; non-metric connection}
\subjclass{53B40, 53C42, 53B20}
\begin{abstract}
Consider the Euclidean space $\r^3$ endowed with a canonical semi-symmetric non-metric connection determined by a vector field $\mathsf{C}\in\mathfrak{X}(\r^3)$. We study surfaces when the sectional curvature with respect to this connection is constant. In case that the surface is cylindrical, we obtain full classification when the rulings are orthogonal or parallel to $\mathsf{C}$. If the surface is rotational, we prove that  the rotation axis is parallel to $\mathsf{C}$ and we classify all conical rotational surfaces with constant sectional curvature. Finally, for the particular case  $\frac12$ of the sectional curvature, the existence of rotational surfaces orthogonally intersecting the rotation axis is also obtained.
\end{abstract}
\maketitle

\section{Introduction} \label{intro}

Friedmann and Schouten introduced in 1924 the notion of a semi-symmetric connection in a Riemannian manifold \cite{fs}. An affine connection $\widetilde{\nabla}$  in a Riemannian manifold $(\widetilde{M},\tilde{g})$ is said to be {\it semi-symmetric connection} if there is a non-zero vector field $\mathsf{C}\in\mathfrak{X}(\widetilde{M}) $ such that its torsion $T$ satisfies the identity   
\begin{equation} \label{tors}
\widetilde{T}(X,Y)=\tilde{g}(\mathsf{C},Y) X-\tilde{g}(\mathsf{C},X) Y, \quad   X,Y \in \mathfrak{X}(\widetilde{M}).
\end{equation}
  If in addition $\widetilde{\nabla}\tilde{g}= 0$, the connection  $\widetilde{\nabla}$ is called a {\it semi-symmetric metric connection} \cite{hay}. Yano studied  semi-symmetric metric connections with zero curvature and when  the covariant derivative of the torsion tensor vanishes \cite{ya}. Submanifolds of Riemannian manifolds with semi-symmetric metric connections have been also  investigated: without aiming a complete list, we refer to the readers to \cite{im,lyl,mc1,mc2,na,wa}.

If $\widetilde{\nabla}\tilde{g}\not=0$, the connection is called {\it semi-symmetric non-metric connection} (snm-connection to abbreviate) \cite{ag,ac}. In this case, there is a relation between $\widetilde{\nabla}$ and the Levi-Civita connection $\widetilde{\nabla}^0$   of $(\widetilde{M},\tilde{g})$, namely,  
\begin{equation} \label{nabla0}
\widetilde{\nabla}_XY=\widetilde{\nabla}^0_XY+\tilde{g}(\mathsf{C},Y) X, \quad   X,Y \in \mathfrak{X}(\widetilde{M}).
\end{equation}
Such as it occurs for semi-symmetric metric connections, it is natural to study submanifolds of Riemannian manifolds endowed with a snm-connection $\widetilde{\nabla}$. Let   $M$ be a submanifold of $\widetilde{M}$. Denote by  $\nabla$ (resp. $\nabla^0$)   the induced connection on $M$ by $\widetilde{\nabla}$ (resp. $\widetilde{\nabla}^0$). The Gauss formulas  are given by
\begin{equation*} 
\begin{split}
\widetilde{\nabla}_XY&=\nabla_XY+h(X,Y),\\
\widetilde{\nabla}^0_XY&=\nabla^0_XY+h^0(X,Y),
\end{split}
\end{equation*}
 for all $ X,Y \in \mathfrak{X}(M)$, where   $h$ is a $(0,2)$-tensor field on $M$ and $h^0$ is the second fundamental form of $M$. It is known  that $h=h^0$    \cite{ac}. Hence that problems of extrinsic nature are the same one that for the Levi-Civita connection.

We consider intrinsic geometry of submanifolds. One of the main concepts in intrinsic Riemannian geometry is that of  sectional curvature. It is natural to carry this concept for snm-connections. However, the sectional curvature of $(\widetilde{M},\tilde{g})$ with respect to $\widetilde{\nabla}$ cannot be defined by the usual way as the Levi-Civita connection $\widetilde{\nabla}^0$. This is because if $\widetilde{R}$ is the curvature tensor of $\widetilde{\nabla}$, the quantity $\tilde{g}(\widetilde{R}(e_1,e_2)e_2,e_1)$, where $\{e_1,e_2\}$ is an orthonormal basis of $\pi$, depends on the basis $\{e_1,e_2\}$: see Sect. \ref{s2} for details. In contrast, the third author of this paper, jointly with I. Mihai,  proved that $\tilde{g}(\widetilde{R}(e_1,e_2)e_2,e_1)+\tilde{g}(\widetilde{R}(e_2,e_1)e_1,e_2)$ is independent on the basis  \cite{am0}. Then they introduced the following notion of sectional curvature for snm-connections.

\begin{definition}   \label{def-sc}
Let $(\widetilde{M},\tilde{g})$ be a Riemannian manifold endowed with a snm-connection $\widetilde{\nabla}$. If $\pi$ is a plane in $ T_p\widetilde{M}$ with an orthonormal basis $\{e_1,e_2\}$, then the {\it sectional curvature} of $\pi$ with respect to $\widetilde{\nabla}$ is defined by
\begin{equation}\label{sc}
\widetilde{K}(\pi)=\frac{\tilde{g}(\widetilde{R}(e_1,e_2)e_2,e_1)+\tilde{g}(\widetilde{R}(e_2,e_1)e_1,e_2)}{2}.
\end{equation}
\end{definition}

Once we have the notion of sectional curvature, it is natural to ask for those submanifolds with  constant sectional curvature.  As for the Levi-Civita connection,  this question is difficult to address in all its generality.

 In this paper, we   consider that   the ambient space is the $3$-dimensional Euclidean space $\r^3$ endowed with the Euclidean metric $\langle,\rangle$. The amount of snm-connections of $\r^3$ is given by the vector fields $\mathsf{C}$ in the definition \eqref{tors} of a semi-symmetric connection.   One of the simplest choices of snm-connections of $\r^3$ is that $\mathsf{C}$ is a canonical vector field.  To be precise, let $(x,y,z)$ be canonical coordinates of $\r^3$ and let  $\{\partial_x,\partial_y,\partial_z\}$ be the corresponding  basis of $\mathfrak{X}(\r^3)$. In fact, if the vector field $\mathsf{C}$ is assumed to be canonical, namely $\mathsf{C} \in \{\partial_x,\partial_y,\partial_z\}$ then, after a change of coordinates of $\r^3$,  $\mathsf{C}$ is a unit constant vector field. 
 
\begin{definition}
A snm-connection $\widetilde{\nabla}$ on $\r^3$ is said to be {\it canonical} if $\mathsf{C}\in\mathfrak{X}(\r^3)$ is a unit constant vector field.
\end{definition}

From now on, unless otherwise specified, we denote by $\mathsf{C}$ a unit constant vector field on $\r^3$.

Definitively, the problem that we study is the classification of surfaces with constant sectional curvature for a given canonical snm-connection $\widetilde{\nabla}$ of $\r^3$. A way to tackle this problem is to impose a certain geometric condition on the surface. A natural   condition is that the surface is invariant by a one-parameter group of rigid motions. Denote $K$ by the sectional curvature with respect to the induced connection on the surface from $\widetilde{\nabla}$. Assuming a certain invariance of the surface, it allows us to expect that the equation $K=c$ can be expressed as an ordinary differential equation, where, under mild conditions, the existence is assured. For example, we can assume that the surface is invariant by a group of translations  or that the surface is invariant by a group of rotations. In the first case, the surface is called cylindrical and in the second one, rotational surface, or surface of revolution.

The organization of this paper is according to both types of surfaces. In Sect. \ref{s2} we prove an useful formula for computing the sectional curvature $K$ of a surface in terms of that of $\r^3$ and the Gaussian and mean curvatures of the surface. We will show some explicit examples of computations of sectional curvatures. 

Section \ref{s3} is devoted to cylindrical surfaces.  A cylindrical surface  can be parametrized by $\psi(s,t)=\gamma(s)+t\vec{w}$, $s\in I\subset\r$, $t\in\r$,  where   $\vec{w}\in\r^3$ is a unitary vector and $\gamma\colon I\to\r^3$ is a curve contained in a plane orthogonal to $\vec{w}$. The surface is invariant by the group of translations generated by $\vec{w}$.  After computing the sectional curvature $K$ in Thm. \ref{cyl-t2}, in Cor. \ref{cor32}, we prove that any cylindrical surface whose rulings are parallel to $\mathsf{C}$ has constant sectional curvature $K$, being $K=\frac12$. Another interesting case of cylindrical surfaces is that the rulings   are   orthogonal  to $\mathsf{C}$.  We obtain  a full classification of these cylindrical surfaces with $K$ constant depending on the sign of $K$  (Cor. \ref{c-cyl}). For the particular values $K=1/2$ and $K=-1/2$, in Cor. \ref{cor-34} we obtain explicit parametrizations of the surfaces. 
 
Rotational surfaces are invariant by rotations about an axis $L$ of $\r^3$ and such surfaces with $K$ constant will be studied in Sect. \ref{s4}. It is worth to point out that there is not a {\it priori} relation between the axis $L$ and the vector field $\mathsf{C}$ that defines the canonical snm-connection. However, we prove in Thm. \ref{t1} that $L$ and $\mathsf{C}$ must be parallel. In Thm. \ref{t-s}, we classify all conical rotational surfaces with $K$ constant proving that these surfaces are planes or circular cylinders. As a last observation, when $K=\frac12$, in Thm. \ref{ort}, the existence of rotational surfaces orthogonally intersecting the rotation axis is also obtained.

\section{Preliminaries} \label{s2}

Let $(\widetilde{M},\tilde{g})$ be a Riemannian manifold of dimension $\geq 2$ and let $\widetilde{\nabla}$ be an affine connection on $\widetilde{M}$. The torsion and curvature of $\widetilde{\nabla}$ are respectively a $(1,2)$-tensor field $\widetilde{T}$ and a $(1,3)$-tensor field $\widetilde{R}$ defined by
\begin{equation*}
\begin{split}
\widetilde{T}(X,Y)&=\widetilde{\nabla}_XY-\widetilde{\nabla}_YX-[X,Y], \\
\widetilde{R}(X,Y)Z&=\widetilde{\nabla}_X\widetilde{\nabla}_YZ-\widetilde{\nabla}_Y\widetilde{\nabla}_XZ-\widetilde{\nabla}_{[X,Y]}Z, 
\end{split}
\end{equation*}
 for $    X,Y,Z \in \mathfrak{X}(\widetilde{M})$. Let  $\widetilde{\nabla}$ be   a snm-connection on $(\widetilde{M},\tilde{g})$ determined by a vector field $\mathsf{C}\in\mathfrak{X}(\widetilde{M})$. Using   \eqref{nabla0},  there is also a relation between $\widetilde{R}$ and the Riemannian curvature tensor $\widetilde{R}^0$ of $\widetilde{\nabla}^0$ (\cite{ag,am0}). Indeed, for orthonormal vectors $e_1,e_2\in T_p\widetilde{M}$, $p\in\widetilde{M}$, we have 
$$
\tilde{g}(\widetilde{R}(e_1,e_2)e_2,e_1)=\tilde{g}(\widetilde{R}^0(e_1,e_2)e_2,e_1)-e_2(\tilde{g}(\mathsf{C},e_2))+\tilde{g}(\mathsf{C},\widetilde{\nabla}^0_{e_2}e_2)+\tilde{g}(\mathsf{C},e_2)^2. 
$$
 Although the first term at the right hand-side is the sectional curvature of the plane section $\pi=\text{span}\{e_1,e_2\}$, the term at the left hand-side depends on the choice of the basis of $\pi$. Therefore, the value $ \tilde{g}(\widetilde{R}(e_1,e_2)e_2,e_1)$ does not stand for a sectional curvature. The     quantity  \eqref{sc} was proposed in \cite{am0} as the definition of sectional curvature of $\pi$ with respect to $\widetilde{\nabla}$ because it is independent on the basis in $T_p\widetilde{M}$.   In case that $\{e_1,e_2\}$ is an arbitrary basis of $\pi$, it is immediate to see 
 \begin{equation}\label{k2}
\widetilde{K}(\pi)=\frac{\tilde{g}(\widetilde{R}(e_1,e_2)e_2,e_1)+\tilde{g}(\widetilde{R}(e_2,e_1)e_1,e_2)}{2( \tilde{g}(e_1,e_1)\tilde{g}(e_2,e_2)-\tilde{g}(e_1,e_2)^2 )}.
\end{equation}

From now on, suppose that  $\widetilde{M}$ is the Euclidean space $\r^3$. We compute the sectional curvature of a plane of $\r^3$. 


\begin{proposition}\label{pr-21}
Let $\widetilde{\nabla}$ be a canonical snm-connection  on $\r^3$. If $\pi$ is a plane of $\r^3$, then its sectional curvature is 
$$
\widetilde{K}(\pi)=\frac{\langle \vec{u},\mathsf{C}\rangle^2+\langle \vec{v},\mathsf{C}\rangle^2}{2},
$$
where $\{\vec{u},\vec{v}\}$ is an orthonormal basis of $\pi$.  As a consequence, $\widetilde{K}(\pi)$ is constant with $0\leq \widetilde{K}(\pi)\leq \frac12  $. Furthermore,  $\widetilde{K}(\pi)=0$ (resp. $\widetilde{K}(\pi)= \frac12$) if and only if  $\pi$ is perpendicular to $\mathsf{C}$ (resp. $\pi$ is parallel to $\mathsf{C}$). 
\end{proposition}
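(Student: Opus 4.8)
The plan is to evaluate the defining quantity \eqref{sc} on a conveniently chosen orthonormal basis and to exploit that Euclidean $\r^3$ is flat. Given a plane $\pi\subset\r^3$ with orthonormal basis $\{\vec u,\vec v\}$ of its direction, I would extend $\vec u$ and $\vec v$ to constant (hence parallel) vector fields $e_1,e_2$ on $\r^3$; these stay orthonormal at every point and restrict on $\pi$ to the given basis. In the relation between $\widetilde{R}$ and $\widetilde{R}^0$ displayed above, three simplifications then occur: $\langle\widetilde{R}^0(e_1,e_2)e_2,e_1\rangle=0$ because $\r^3$ is flat; $\langle\mathsf{C},\widetilde{\nabla}^0_{e_2}e_2\rangle=0$ because $e_2$ is parallel; and $e_2(\langle\mathsf{C},e_2\rangle)=0$ because $\langle\mathsf{C},e_2\rangle$ is a constant function, being the inner product of two constant vector fields. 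Hence $\langle\widetilde{R}(e_1,e_2)e_2,e_1\rangle=\langle\mathsf{C},e_2\rangle^2=\langle\vec v,\mathsf{C}\rangle^2$, and by the same computation with $e_1$ and $e_2$ interchanged, $\langle\widetilde{R}(e_2,e_1)e_1,e_2\rangle=\langle\vec u,\mathsf{C}\rangle^2$. Substituting into \eqref{sc} gives the stated formula for $\widetilde{K}(\pi)$.

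For the remaining assertions I would complete $\{\vec u,\vec v\}$ to an orthonormal basis $\{\vec u,\vec v,\vec n\}$ of $\r^3$, where $\vec n$ is a unit normal of $\pi$. Since $\mathsf{C}$ is unitary,
$$
\langle\vec u,\mathsf{C}\rangle^2+\langle\vec v,\mathsf{C}\rangle^2+\langle\vec n,\mathsf{C}\rangle^2=1,
$$
whence $\langle\vec u,\mathsf{C}\rangle^2+\langle\vec v,\mathsf{C}\rangle^2=1-\langle\vec n,\mathsf{C}\rangle^2\in[0,1]$ and therefore $0\le\widetilde{K}(\pi)\le\frac12$. In particular $\langle\vec u,\mathsf{C}\rangle^2+\langle\vec v,\mathsf{C}\rangle^2=|\mathsf{C}|^2-\langle\vec n,\mathsf{C}\rangle^2$ equals the squared length of the orthogonal projection of $\mathsf{C}$ onto $\pi$, which makes the basis-independence asserted in Definition \ref{def-sc} transparent. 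Finally, $\widetilde{K}(\pi)=0$ iff $\langle\vec n,\mathsf{C}\rangle^2=1$, i.e.\ $\mathsf{C}=\pm\vec n$, which is exactly the condition that $\pi$ be perpendicular to $\mathsf{C}$; and $\widetilde{K}(\pi)=\frac12$ iff $\langle\vec n,\mathsf{C}\rangle=0$, i.e.\ $\mathsf{C}$ is orthogonal to the normal of $\pi$, equivalently $\mathsf{C}$ is parallel to $\pi$.

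I do not expect a genuine obstacle. The only point requiring a little care is the passage to constant vector fields: since the quantity \eqref{sc} is known to be independent of the orthonormal basis (\cite{am0}), one is free to compute it using the parallel extensions $e_1,e_2$, after which the three vanishings above make everything immediate. One could instead expand $\widetilde{R}$ directly from \eqref{nabla0} together with the definitions of torsion and curvature, but invoking the already-recorded relation between $\widetilde{R}$ and $\widetilde{R}^0$ is the shortest path.
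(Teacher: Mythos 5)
Your proof is correct. It reaches the same formula as the paper but by a slightly different route: the paper expands $\widetilde{R}$ from scratch using \eqref{nabla0}, computing all first- and second-order covariant derivatives of the constant extensions of $\vec{u},\vec{v}$ and reading off $\widetilde{R}(\vec{u},\vec{v})\vec{v}$ and $\widetilde{R}(\vec{v},\vec{u})\vec{u}$ explicitly, whereas you shortcut through the already-displayed relation between $\widetilde{R}$ and $\widetilde{R}^0$ and observe that, for constant $e_1,e_2$ and constant $\mathsf{C}$, every term except $\langle\mathsf{C},e_2\rangle^2$ vanishes. Both arguments rest on the same facts (flatness of $\r^3$, constancy of the extensions and of $\mathsf{C}$), so the difference is one of packaging; your version is shorter but leans on the correctness of that displayed identity, while the paper's is self-contained. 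Your treatment of the final assertions is actually more complete than the paper's: the paper dismisses them as ``a consequence of this formula,'' while you supply the completion to an orthonormal basis $\{\vec{u},\vec{v},\vec{n}\}$, the identity $\langle\vec{u},\mathsf{C}\rangle^2+\langle\vec{v},\mathsf{C}\rangle^2=1-\langle\vec{n},\mathsf{C}\rangle^2$, and the resulting characterizations of the extreme values, together with the pleasant remark that $2\widetilde{K}(\pi)$ is the squared length of the projection of $\mathsf{C}$ onto $\pi$, which makes the basis-independence evident. One small point of hygiene: you justify passing to constant extensions by appealing to basis-independence of \eqref{sc}, but the relevant fact is that $\widetilde{R}$ is a tensor, so the pointwise value is independent of the extension chosen; this is harmless here since your extensions do restrict to the given orthonormal basis at every point of $\pi$.
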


\begin{proof}
Using \eqref{nabla0} we compute
\begin{eqnarray*}
&\widetilde{\nabla}_{\vec{u}}\vec{u} = \langle \vec{u},\mathsf{C}\rangle \vec{u}, & \widetilde{\nabla}_{\vec{u}}\vec{v}=\langle \vec{v},\mathsf{C}\rangle \vec{u}, \\
&\widetilde{\nabla}_{\vec{v}}\vec{u}=\langle \vec{u},\mathsf{C}\rangle \vec{v}, &
\widetilde{\nabla}_{\vec{v}}\vec{v}=\langle \vec{v},\mathsf{C}\rangle \vec{v},
\end{eqnarray*}
and
\begin{eqnarray*}
&\widetilde{\nabla}_{\vec{u}}\widetilde{\nabla}_{\vec{v}}\vec{v}=\langle \vec{v},\mathsf{C}\rangle^2 \vec{u}, & \widetilde{\nabla}_{\vec{v}}\widetilde{\nabla}_{\vec{u}}\vec{v}=\langle \vec{u},\mathsf{C}\rangle\langle \vec{v},\mathsf{C}\rangle \vec{v}, \\
&\widetilde{\nabla}_{\vec{v}}\widetilde{\nabla}_{\vec{u}}\vec{u}=\langle \vec{u},\mathsf{C}\rangle^2 \vec{v}, &
\widetilde{\nabla}_{\vec{u}}\widetilde{\nabla}_{\vec{v}}\vec{u}=\langle \vec{u},\mathsf{C}\rangle\langle \vec{v},\mathsf{C}\rangle \vec{u}.
\end{eqnarray*}
Also it is easy to see $[\vec{u},\vec{v}]=0$. Hence the curvature tensor $\widetilde{R}$ is determined by 
\begin{equation*}
\begin{split}
\widetilde{R}(\vec{u},\vec{v})\vec{v}&=\langle \vec{v},\mathsf{C}\rangle^2 \vec{u}-\langle \vec{u},\mathsf{C}\rangle\langle \vec{v},\mathsf{C}\rangle \vec{v}, \\
\widetilde{R}(\vec{v},\vec{u})\vec{u}&=\langle \vec{u},\mathsf{C}\rangle^2 \vec{v}-\langle \vec{u},\mathsf{C}\rangle\langle \vec{v},\mathsf{C}\rangle \vec{u}.
\end{split}
\end{equation*}
This gives the formula for $\widetilde{K}(\pi)$.   The last statement is a consequence of this formula.  
\end{proof}

 \begin{remark}
The notion of scalar curvature at a point $p\in\r^3$ with respect to a snm-connection $\widetilde{\nabla}$ can be introduced in a similar manner as for the Levi-Civita connection. Let $\{\vec{u},\vec{v},\vec{w}\}$ be an orthonormal basis of $T_p\r^3$, $p\in\r^3$. The {\it scalar curvature} $\rho$  with respect to $\widetilde{\nabla}$ is defined by
$$
\rho(p)=\widetilde{K}(\vec{u},\vec{v})+\widetilde{K}(\vec{u},\vec{w})+\widetilde{K}(\vec{v},\vec{w}) , \quad p\in \r^3.
$$
If $\widetilde{\nabla}$ is canonical, then by Prop. \ref{pr-21} the scalar curvature is constant, namely $\rho(p)=1$, for every $p\in \r^3$.  
\end{remark}

We conclude this section establishing a relation between the sectional curvatures $K$ and $\widetilde{K}$ of a surface in $\r^3$ in terms of the Gaussian and the mean curvatures of the surface.   Let $M$ be an oriented surface immersed   in $\r^3$ and $N$ its unit normal vector field. Let also $\widetilde{\nabla}$ be a snm-connection on $\r^3$ determined by an arbitrary vector field $\mathsf{C}\in\mathfrak{X}(\r^3)$. We have the decomposition of  $\mathsf{C}$ in its tangential and normal components with respect to   $M$,
$$
\mathsf{C}=\mathsf{C}^{\top}+\langle \mathsf{C},N\rangle N.
$$
If $X,Y,Z,U\in\mathfrak{X}(M)$, then the  Gauss equation with respect to $\widetilde{\nabla}$ is (\cite{ac}):
\begin{equation} \label{gauss-eq}
\begin{split}
 \langle R(X,Y)Z,U \rangle&= \langle \widetilde{R}(X,Y)Z,U\rangle-\langle h(X,Z),h(Y,U)\rangle +\langle h(X,U),h(Y,Z)\rangle  \\
&  +\langle \mathsf{C},N\rangle\left( \langle h(X,Z),N\rangle\langle Y,U\rangle- \langle h(Y,Z),N\rangle \langle X,U\rangle\right).
\end{split}%
\end{equation}

\begin{proposition} Let $M$ be an oriented surface in $\r^3$ and denote by  $G$ and $H$ the Gaussian curvature and the mean curvature of $M$, respectively, with respect to the Levi-Civita connection. Then  
\begin{equation}\label{help0}K=\widetilde{K}+G-\langle \mathsf{C},N\rangle H.
\end{equation}
Moreover, if  $p\in M$ then  there is an orthogonal basis $\{e_1,e_2\}$ of $T_pM$   such that
\begin{equation}\label{help}
\begin{split}
\langle R(e_1,e_2)e_2,e_1 \rangle &= \langle \widetilde{R}(e_1,e_2)e_2,e_1 \rangle+h_{11}h_{22}-g_{11}h_{22}\langle \mathsf{C},N\rangle,\\
\langle R(e_2,e_1)e_1,e_2 \rangle &=\langle \widetilde{R}(e_2,e_1)e_1,e_2 \rangle+h_{11}h_{22}-g_{22}h_{11}\langle \mathsf{C},N\rangle,
\end{split}
\end{equation}
where $g_{ij}=\langle e_i,e_j\rangle$ and $h_{ij}$ are the coefficients of the second fundamental form $h$.
\end{proposition}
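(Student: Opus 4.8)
The plan is to read everything off the Gauss equation \eqref{gauss-eq} for $\widetilde{\nabla}$, specialized to a hypersurface. Fix $p\in M$. Since the normal bundle of $M$ in $\r^3$ is one-dimensional we may write $h(X,Y)=\langle h(X,Y),N\rangle N$, so that $\langle h(X,Z),h(Y,U)\rangle=\langle h(X,Z),N\rangle\langle h(Y,U),N\rangle$; with the notation $h_{ij}=\langle h(e_i,e_j),N\rangle$ and $g_{ij}=\langle e_i,e_j\rangle$ this turns the two curvature-correction terms of \eqref{gauss-eq} into products of coefficients. Recall also that $K$ (resp. $\widetilde{K}(T_pM)$) is computed from $R$ (resp. $\widetilde{R}$) via formula \eqref{k2}, so the left-hand sides of \eqref{help} are not yet sectional curvatures: they must be symmetrized and divided by $2(g_{11}g_{22}-g_{12}^2)$.

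First I would prove \eqref{help}. By the spectral theorem applied to the shape operator $S_p$ of $M$ at $p$ (self-adjoint with respect to the first fundamental form), there is an \emph{orthogonal} basis $\{e_1,e_2\}$ of $T_pM$ made of principal directions; for it $g_{12}=0$ and $h_{12}=0$. Substituting $(X,Y,Z,U)=(e_1,e_2,e_2,e_1)$ and then $(X,Y,Z,U)=(e_2,e_1,e_1,e_2)$ into \eqref{gauss-eq}, the term $-\langle h(e_1,e_2),h(e_2,e_1)\rangle=-h_{12}^2$ and the term $h_{12}g_{12}$ inside the $\langle\mathsf{C},N\rangle$-bracket both vanish, and what remains is exactly the two identities in \eqref{help}.

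Then \eqref{help0} follows by choosing the same basis to be \emph{orthonormal}, i.e. normalizing the principal directions so that moreover $g_{11}=g_{22}=1$. Adding the two equations in \eqref{help}, dividing by $2(g_{11}g_{22}-g_{12}^2)=2$, and using \eqref{k2} for $K$ on the left and for $\widetilde{K}=\widetilde{K}(T_pM)$ on the $\widetilde{R}$-terms, we obtain $K=\widetilde{K}+h_{11}h_{22}-\tfrac12(h_{11}+h_{22})\langle\mathsf{C},N\rangle$. It remains only to recognize that in the orthonormal principal basis $h_{11},h_{22}$ are precisely the principal curvatures, hence $h_{11}h_{22}=G$ and $\tfrac12(h_{11}+h_{22})=H$, which gives \eqref{help0}.

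There is no real obstacle here: the argument is essentially bookkeeping with \eqref{gauss-eq}. The only two points that require a moment's care are (i) to diagonalize $h$ rather than pick an arbitrary orthogonal basis — this is what forces $h_{12}=0$, cleans up \eqref{help}, and (after normalizing) identifies $h_{11},h_{22}$ with the principal curvatures; and (ii) to remember the normalization factor $2(g_{11}g_{22}-g_{12}^2)$ from \eqref{k2} when passing from \eqref{help} to the sectional curvatures $K$ and $\widetilde{K}$.
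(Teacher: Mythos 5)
Your proposal is correct and follows essentially the same route as the paper: choose an orthogonal basis diagonalizing the second fundamental form (the paper invokes Chen's flat-normal-bundle lemmas where you invoke the spectral theorem for the shape operator, which in codimension one is the same thing), substitute into the Gauss equation \eqref{gauss-eq} to get \eqref{help}, then sum and divide by $2(g_{11}g_{22}-g_{12}^2)$ via \eqref{k2}. The only cosmetic difference is that you normalize the basis before identifying $h_{11}h_{22}$ and $\tfrac12(h_{11}+h_{22})$ with $G$ and $H$, while the paper keeps the basis merely orthogonal and uses $G=h_{11}h_{22}/(g_{11}g_{22})$ and $H=(g_{22}h_{11}+g_{11}h_{22})/(2g_{11}g_{22})$.
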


\begin{proof}
Since the codimension of $M$ in $\r^3$ is $1$, it has trivially flat normal bundle. Let $\{e_1,e_2\}$ be an orthogonal basis  of $T_pM$ such that $g_{12}=0$ and $h_{12}^0=0$, where $h_{ij}^0$ are the coefficients of the second fundamental form of $M$ with respect to the Levi-Civita connection: see \cite[Props. 3.1 and 3.2]{che}.  Therefore we have $h_{12}=0$ because $h=h^0$. By the Gauss equation \eqref{gauss-eq} we obtain \eqref{help}. With respect to this basis, we have 
$$G=\frac{h_{11}h_{22}}{g_{11}g_{22}},\quad H=\frac{g_{22}h_{11}+g_{11}h_{22}}{2g_{11}g_{22}}.$$
Identity \eqref{help0} is a consequence of \eqref{help} and formulas \eqref{k2} for   $K$ and $\widetilde{K}$. 
\end{proof}

\begin{remark}\label{re-23}
Identity \eqref{help0} is satisfied for any vector field $\mathsf{C}\in\mathfrak{X}(\r^3)$. Notice also that $K$ is invariant by translations of $\r^3$. This is because $G$ and $H$ do no change, as well as $\widetilde{K}$ because a plane $\pi$ is not affected by translations. However, rigid motions change the value of $\widetilde{K}$ and, consequently of $K$. This is because of the presence of the vector field $\mathsf{C}$ in \eqref{nabla0}  for computing the successive covariant derivatives.
\end{remark}

Simple consequences of the relation \eqref{help0} appear in the following result.

\begin{corollary}
\begin{enumerate}
\item For a plane, we have $K=\widetilde{K}$. In particular, $K\geq 0$ and equality holds if and only if the plane is orthogonal to the vector field $\mathsf{C}$. 
\item For a cylindrical surface whose rulings are parallel  to $\mathsf{C}$, we have $K=\widetilde{K}$.
\end{enumerate}
\end{corollary}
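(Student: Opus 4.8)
The plan is to deduce both statements directly from the master identity \eqref{help0}, namely $K=\widetilde{K}+G-\langle\mathsf{C},N\rangle H$, combined with Proposition \ref{pr-21}, using only the fact that planes and cylinders are intrinsically flat. So the proof should be short: identify which of the two correction terms $G$ and $\langle\mathsf{C},N\rangle H$ vanish in each case.

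For part (1), a plane $\pi\subset\r^3$ is totally geodesic, so its second fundamental form vanishes identically; hence $G=0$ and $H=0$, and \eqref{help0} collapses to $K=\widetilde{K}$. The remaining assertions are then a transcription of Proposition \ref{pr-21}: for an orthonormal basis $\{\vec u,\vec v\}$ of $\pi$ one has $\widetilde{K}(\pi)=\tfrac12\bigl(\langle\vec u,\mathsf{C}\rangle^2+\langle\vec v,\mathsf{C}\rangle^2\bigr)\ge 0$, whence $K\ge 0$, with equality precisely when $\langle\vec u,\mathsf{C}\rangle=\langle\vec v,\mathsf{C}\rangle=0$, i.e. when $\mathsf{C}$ is orthogonal to $\pi$.

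For part (2), I would write the surface in the normalized form of Section \ref{s3}, $\psi(s,t)=\gamma(s)+t\,\mathsf{C}$ with $\gamma$ contained in a plane orthogonal to $\mathsf{C}$ (the ruling direction $\vec w$ being taken equal to $\mathsf{C}$). Then $\psi_t=\mathsf{C}$ is everywhere tangent to $M$, so the unit normal $N$ satisfies $\langle\mathsf{C},N\rangle=0$, which kills the last term of \eqref{help0}. Moreover $\psi_{tt}=0$ and $\psi_{st}=0$, so in this parametrization $h_{22}=h_{12}=0$ and therefore $G=h_{11}h_{22}/(g_{11}g_{22})=0$. Substituting into \eqref{help0} gives $K=\widetilde{K}$.

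I do not expect a genuine obstacle here: the corollary is a formal consequence of \eqref{help0}. The only points needing a line of verification are that a cylinder is flat and that its unit normal is orthogonal to the ruling direction, and both follow at once from the explicit parametrization, so the whole argument is a couple of lines.
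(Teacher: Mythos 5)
Your proof is correct and follows exactly the paper's route: both parts are read off from the identity \eqref{help0}, using $G=H=0$ for a plane and $G=0$, $\langle\mathsf{C},N\rangle=0$ for a cylinder ruled by $\mathsf{C}$, with the sign statement in (1) imported from Proposition \ref{pr-21}. The paper's own proof is the same one-line observation, so nothing further is needed.
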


\begin{proof} It is immediate because for a plane we have $G=H=0$, and for a cylindrical surface with rulings parallel to $\mathsf{C}$ we have $G=0$ and $\langle \mathsf{C},N\rangle=0$.
\end{proof}

Thanks to this corollary we see that a plane and a cylindrical cylinder satisfy the equality $K=\widetilde{K}$. 
In general, a surface satisfies $K=\widetilde{K}$ if and only if  $G=\langle \mathsf{C},N\rangle H$. In case that $\mathsf{C}$ is a canonical vector field, we construct such a surface as follows.  

\begin{example}\label{ex}
Let $\mathsf{C}=\partial_z$. To find a surface satisfying $G=\langle \partial_z ,N \rangle H$, we consider surfaces that are graphs of smooth functions   $z=u(x,y)$, where   $(x,y)\in \Omega \subset \r^2$. Then it is not difficult to find that the relation $G=\langle N,\partial_z\rangle H$ is written by 
$$
2(u_{xx}u_{yy}-u_{xy}^2)=(1+u_{y}^2)u_{xx}-2u_{x}u_{y}u_{xy}+(1+u_{x}^2)u_{yy}.
$$
We find solutions of this equation by   the technique of separation of variables. Assuming $u(x,y)=f(x)+g(y)$, for smooth functions $f=f(x)$ and $g=g(y)$, $x\in I\subset\r$, $y\in J\subset\r$, the above equation becomes
\begin{equation} \label{ex-1}
2f''g''=f''(1+g'^2)+g''(1+f'^2),
\end{equation}
for all  $x\in I$, $y\in J$. Here a prime denotes the derivative with respect to each variable. A solution of  Eq. \eqref{ex-1} appears when $f$ and $g$ are linear functions, $f''=g''=0$ identically. Then   $M$ is a plane parallel to the $xy$-plane. We   discard this case by assuming $f''g''\neq 0$ on $I \times J$. Dividing Eq. \eqref{ex-1} by $f''g''$, we obtain
$$2-\frac{1+g'^2}{g''}=\frac{1+f'^2}{f''}.$$
Since the left hand-side depends only on the variable $y$ and the right hand-side on the variable $x$, then we deduce the existence of the nonzero constant $c$ such that 
$$2-\frac{1+g'^2}{g''}=\frac{1}{c}=\frac{1+f'^2}{f''}.$$
Notice that if $c=1/2$, then $1+g'^2=0$, which it is not possible. By solving these equations,  we obtain, up to translations of $x$ and $y$ and suitable constants,  
$$
u(x,y)=-\frac{1}{c}\log \cos (cx)-\frac{2c-1}{c}\log \cos (\frac{cy}{2c-1})) .
$$
 See Fig. \ref{fig1} for the particular case $c=1$. \end{example}

\begin{figure}[hbtp]
\begin{center}
\includegraphics[width=.4\textwidth]{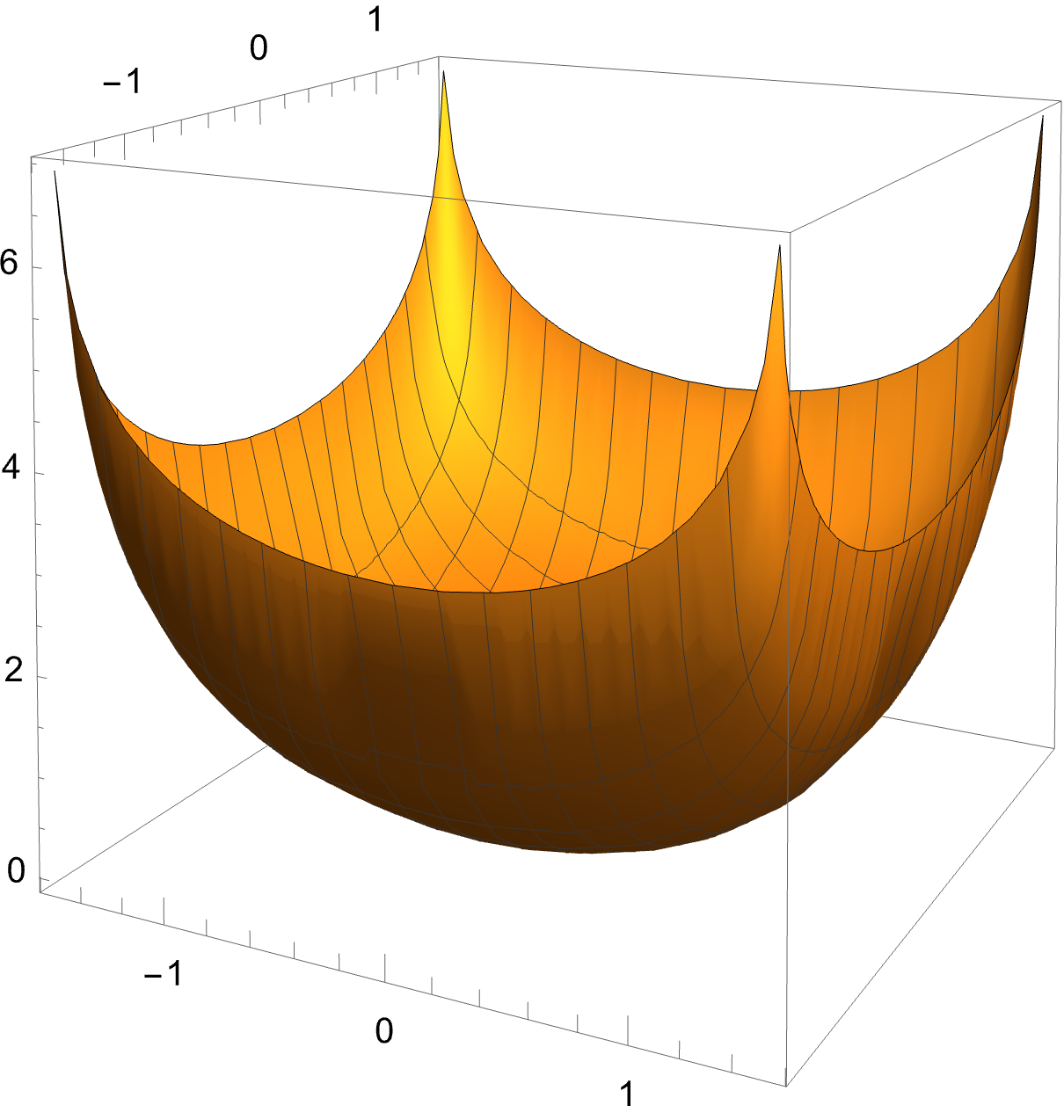}
\end{center}
\caption{Graph of  $z=-\log (\cos (x)\cos (y))$.}\label{fig1}
\end{figure}

\section{Cylindrical surfaces}\label{s3}

 Let $M$ be a cylindrical surface in $\r^3$ whose rulings are parallel to $\vec{w}$, where  $\vec{w}\in \r^3$, $|\vec{w}|=1$. If $\gamma=\gamma(s)$ is the generating curve of $M$ contained in a plane orthogonal to $\vec{w}$, then a parametrization of $M$ is 
\begin{equation} \label{cyl-par}
\psi(s,t)=\gamma(s)+t\vec{w}, \quad s\in I, t\in \r.
\end{equation}
Without loss of generality, we suppose that $\gamma$ is parametrized by arc-length. Let  ${\bf n}$ be the unit normal vector of $\gamma$ and let $\kappa$ be the Frenet curvature of $\gamma$ with $\gamma''=\kappa{\bf n}$.
Since $\gamma$ is contained in a plane orthogonal to $\vec{w}$, consider the orientation on $\gamma$ such that $(\gamma',\vec{w},{\bf n})=1$, where    $(\vec{a},\vec{b},\vec{c})$ stands for the determinant of the matrix formed by three vectors $\vec{a}$, $\vec{b}$, $\vec{c}$ of $\r^3$. 

\begin{theorem}\label{cyl-t2}
Let $\widetilde{\nabla}$ be a canonical snm-connection on $\r^3$.  If  $M$ is a cylindrical surface parametrized by \eqref{cyl-par}, then its sectional curvature $K$ with respect to   $\widetilde{\nabla}$   is 
\begin{equation} \label{cyl-sec2}
 K=\frac{1}{2} \left (\langle \vec{w},\mathsf{C} \rangle^2 + \langle \gamma',\mathsf{C} \rangle^2 -\kappa\langle{\bf n},\mathsf{C}\rangle  \right ).
\end{equation}
\end{theorem}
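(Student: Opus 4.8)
The plan is to exploit the relation \eqref{help0}, namely $K=\widetilde{K}+G-\langle\mathsf{C},N\rangle H$, which reduces the computation of $K$ to three Euclidean-geometric quantities: the ambient sectional curvature $\widetilde{K}(\pi)$ of the tangent plane $\pi=T_pM$, the Gaussian curvature $G$, and the term $\langle\mathsf{C},N\rangle H$. First I would set up a convenient orthonormal frame on $M$ adapted to the parametrization \eqref{cyl-par}. Because $\gamma$ is arc-length parametrized and lies in a plane orthogonal to $\vec{w}$, the vectors $\gamma'(s)$ and $\vec{w}$ are orthonormal and tangent to $M$, and the unit normal is $N=\gamma'\times\vec{w}=\pm{\bf n}$; with the chosen orientation $(\gamma',\vec{w},{\bf n})=1$ one checks $N={\bf n}$ (or its opposite, a sign I would fix carefully since it affects the sign of the $\kappa\langle{\bf n},\mathsf{C}\rangle$ term).

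Next I would compute each of the three pieces. For $\widetilde{K}(\pi)$, Proposition~\ref{pr-21} applies directly with the orthonormal basis $\{\gamma',\vec{w}\}$ of $\pi$, giving $\widetilde{K}(\pi)=\tfrac12(\langle\gamma',\mathsf{C}\rangle^2+\langle\vec{w},\mathsf{C}\rangle^2)$, which already accounts for the first two terms inside the parentheses of \eqref{cyl-sec2}. For the Gaussian curvature: a cylinder over a plane curve is flat, so $G=0$ — this is standard, or one sees it from the fact that one principal curvature (in the ruling direction $\vec{w}$) vanishes. Finally, for $\langle\mathsf{C},N\rangle H$: since $G=0$, the mean curvature is $H=\tfrac12\kappa_g$ where $\kappa_g$ is the normal curvature in the direction $\gamma'$, and for a cylinder this equals the Frenet curvature $\kappa$ of the generating curve (computed via $\langle\psi_{ss},N\rangle=\langle\gamma'',{\bf n}\rangle=\kappa$). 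With $N={\bf n}$ this yields $\langle\mathsf{C},N\rangle H=\tfrac12\kappa\langle{\bf n},\mathsf{C}\rangle$.

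Assembling, $K=\widetilde{K}+G-\langle\mathsf{C},N\rangle H=\tfrac12(\langle\gamma',\mathsf{C}\rangle^2+\langle\vec{w},\mathsf{C}\rangle^2)+0-\tfrac12\kappa\langle{\bf n},\mathsf{C}\rangle$, which is exactly \eqref{cyl-sec2}. Alternatively, if one prefers a self-contained derivation not invoking \eqref{help0}, I would instead use the Gauss equation \eqref{gauss-eq} directly with $X=U=\gamma'$, $Y=Z=\vec{w}$ (and the swapped version), compute $\widetilde{R}(\gamma',\vec{w})\vec{w}$ from \eqref{nabla0} — noting $[\gamma',\vec{w}]=0$ and that the covariant derivatives of the constant-direction fields $\gamma',\vec{w}$ against $\partial_t$ vanish — and plug the second fundamental form coefficients $h_{11}=\kappa\langle{\bf n},\mathsf{C}\rangle$-adjusted, $h_{22}=0$ into formula \eqref{k2}.

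The main obstacle I anticipate is purely bookkeeping rather than conceptual: getting the orientation and sign conventions consistent, specifically confirming that the unit normal $N$ appearing in \eqref{help0} coincides with the Frenet normal ${\bf n}$ (up to the prescribed sign) so that the term $-\langle\mathsf{C},N\rangle H$ contributes $-\tfrac12\kappa\langle{\bf n},\mathsf{C}\rangle$ with the correct sign, and checking that $h=h^0$ lets us freely use the Levi-Civita second fundamental form. One should also verify the frame $\{\gamma',\vec{w}\}$ is genuinely orthonormal at every point (it is, since $\gamma$ is unit-speed and planar orthogonal to $\vec{w}$) so that no normalizing denominators intrude in Proposition~\ref{pr-21}. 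Beyond that, the computation is short.
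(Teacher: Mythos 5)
Your proposal is correct and takes essentially the same route as the paper: both rest on the decomposition $K=\widetilde{K}+G-\langle \mathsf{C},N\rangle H$ from \eqref{help0}, with $G=0$, $N=\gamma'\times\vec{w}={\bf n}$ and $H=\kappa/2$ for the cylinder. The only difference is that you obtain $\widetilde{K}=\tfrac12\left(\langle \gamma',\mathsf{C}\rangle^2+\langle \vec{w},\mathsf{C}\rangle^2\right)$ by citing Proposition~\ref{pr-21} applied to the tangent plane, whereas the paper recomputes it from the covariant derivatives of $e_1=\gamma'$, $e_2=\vec{w}$; this shortcut is legitimate since $\widetilde{R}$ is tensorial, so the ambient sectional curvature depends only on the plane section.
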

\begin{proof}
The tangent plane of $M$ is spanned by an orthonormal basis $\{e_1,e_2\}$, where $e_1=\psi_s=\gamma'$ and $e_2=\psi_t=\vec{w}$. We know that the Gaussian curvature is $G=0$. The Gauss map and the mean curvature of $M$ are given by
$$
N= \gamma'\times w , \quad H= \frac{( \gamma',w,\gamma'')}{2 }=\frac{\kappa}{2}.
$$
We compute the covariant derivatives as follows
\begin{equation*}
\begin{array}{lll}
\widetilde{\nabla}_{e_1}e_1=\gamma''+\langle \gamma',\mathsf{C} \rangle \gamma', &\widetilde{\nabla}_{e_1}e_2=\langle \vec{w},\mathsf{C} \rangle\gamma' ,\\
\widetilde{\nabla}_{e_2}e_1=\langle \gamma',\mathsf{C} \rangle\vec{w}, &\widetilde{\nabla}_{e_2}e_2=\langle \vec{w},\mathsf{C} \rangle\vec{w}.
\end{array}
\end{equation*}
Because $[e_1,e_2]=\widetilde{\nabla}^0_{e_1}e_2-\widetilde{\nabla}^0_{e_2}e_1$, we conclude that $[e_1,e_2]=0$. We also compute
\begin{equation*}
\begin{split} 
\widetilde{\nabla}_{e_1}\widetilde{\nabla}_{e_2}e_2&=\langle \vec{w},\mathsf{C} \rangle^2 \gamma',\\
\widetilde{\nabla}_{e_2}\widetilde{\nabla}_{e_1}e_2&=\langle \vec{w},\mathsf{C} \rangle \langle \gamma',\mathsf{C} \rangle \vec{w},\\
\widetilde{\nabla}_{e_2}\widetilde{\nabla}_{e_1}e_1&=( \langle \gamma'',\mathsf{C} \rangle+\langle \gamma',\mathsf{C} \rangle^2) \vec{w},\\\widetilde{\nabla}_{e_1}\widetilde{\nabla}_{e_2}e_1&=\langle \gamma'',\mathsf{C} \rangle\vec{w}+\langle \gamma',\mathsf{C} \rangle\langle \vec{w},\mathsf{C} \rangle \gamma',
\end{split}
\end{equation*}
and thus
$$
\widetilde{K}=\frac{1}{2} \left (\langle \vec{w},\mathsf{C} \rangle^2 + \langle \gamma',\mathsf{C} \rangle^2  \right ).
$$
By \eqref{help0} we find
$$K=\frac{1}{2} \left (\langle \vec{w},\mathsf{C} \rangle^2 + \langle \gamma',\mathsf{C} \rangle^2 -(\gamma',\vec{w},\mathsf{C})(\gamma'',\gamma',\vec{w})  \right ).
$$
The result follows because $\gamma'\times\vec{w}={\bf n}$ and $\gamma''=\kappa{\bf n}$.
\end{proof}

We distinguish two particular cases, when the rulings are parallel or orthogonal to the constant vector field $\mathsf{C}$.

\begin{corollary} \label{cor32}
Any cylindrical surface whose rulings are parallel to $\mathsf{C}$ has constant sectional curvature  $K=1/2$ with respect to a canonical snm-connection determined by $\mathsf{C}$.
\end{corollary}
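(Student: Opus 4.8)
The plan is to read this off directly from Theorem~\ref{cyl-t2}. If the rulings of $M$ are parallel to $\mathsf{C}$, then the unit ruling direction satisfies $\vec{w}=\epsilon\,\mathsf{C}$ with $\epsilon=\pm1$, so $\langle\vec{w},\mathsf{C}\rangle^2=1$. First I would record that, by the very construction of the parametrization \eqref{cyl-par}, the generating curve $\gamma$ lies in a plane $\Pi$ orthogonal to $\vec{w}=\epsilon\,\mathsf{C}$; hence every vector tangent to $\Pi$ is orthogonal to $\mathsf{C}$. In particular $\gamma'$ is tangent to $\Pi$, so $\langle\gamma',\mathsf{C}\rangle=0$, and the planar unit normal ${\bf n}$ of $\gamma$ is also tangent to $\Pi$, so $\langle{\bf n},\mathsf{C}\rangle=0$. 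Substituting these three facts into \eqref{cyl-sec2} gives
\[
K=\frac12\bigl(1+0-\kappa\cdot 0\bigr)=\frac12,
\]
which is the claim, valid at every point and for every choice of generating curve $\gamma$.

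An equally short alternative route, which I would mention as a cross-check, goes through the relation \eqref{help0}. For a cylindrical surface $G=0$, and the surface normal $N=\gamma'\times\vec{w}$ is orthogonal to $\vec{w}=\epsilon\,\mathsf{C}$, hence $\langle\mathsf{C},N\rangle=0$; therefore $K=\widetilde{K}$. Since the tangent plane $T_pM=\operatorname{span}\{\gamma',\vec{w}\}$ contains $\vec{w}=\epsilon\,\mathsf{C}$, it is a plane parallel to $\mathsf{C}$, so Proposition~\ref{pr-21} yields $\widetilde{K}(T_pM)=\tfrac12$, and again $K=\tfrac12$. (This is precisely item~(2) of the Corollary following \eqref{help0}, now combined with the evaluation of $\widetilde{K}$ from Proposition~\ref{pr-21}.)

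There is essentially no hard part here: the statement is a specialization of Theorem~\ref{cyl-t2}. The only point requiring a little care is bookkeeping about which vector is which, namely keeping the planar Frenet normal ${\bf n}$ of $\gamma$ distinct from the surface normal $N$, and noting that both ${\bf n}$ and $\gamma'$ lie in the plane $\Pi\perp\vec{w}$, so that both inner products with $\mathsf{C}$ vanish regardless of the sign $\epsilon$. No case distinction on $\operatorname{sgn}(K)$ or on $\kappa$ is needed, since the $\kappa\langle{\bf n},\mathsf{C}\rangle$ term drops out identically.
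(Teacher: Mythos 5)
Your proposal is correct and follows the same route the paper intends: the corollary is a direct specialization of Theorem~\ref{cyl-t2}, with $\langle\vec{w},\mathsf{C}\rangle^2=1$ and $\langle\gamma',\mathsf{C}\rangle=\langle{\bf n},\mathsf{C}\rangle=0$ because $\gamma$ lies in a plane orthogonal to $\vec{w}=\pm\mathsf{C}$. The cross-check via \eqref{help0} and Proposition~\ref{pr-21} is a nice confirmation but adds nothing essential.
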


Suppose that the rulings are orthogonal to $\mathsf{C}$. In the next result we are going to obtain explicit parametrizations of cylindrical surfaces with constant sectional curvature. Without loss of generality, we suppose that $\mathsf{C}=\partial_z$ and $\vec{w}=(0,1,0)$.  Then $\gamma$ is contained in the $xz$-plane, say $\gamma(s)=(x(s),0,z(s))$, for smooth functions $x,z\colon I\to\r$. The case that $M$ is a plane is particular. Any plane of $\r^3$ perpendicular to $\partial_z$ can be viewed as a cylindrical surface with rulings orthogonal to $\partial_z$. By Prop. \ref{pr-21}, we know that its curvature $K$ is constant with $0\leq K\leq \frac12$.  We discard this case. 

\begin{corollary} \label{c-cyl}
Let $\widetilde{\nabla}$ be the canonical snm-connection determined by $\partial_z$ and $M$ be a non-planar cylindrical surface whose rulings are orthogonal to $\partial_z$. If  the sectional curvature $K$ with respect to $\widetilde{\nabla}$ is constant, then the parametrization of the generating curve $\gamma$ is
\begin{enumerate}
\item Case $K>0$, then $\gamma(s)=(\int^s \sqrt{1-2 K \tanh ^2\left(\sqrt{2 K} t\right)}\, dt ,-\log(\cosh(\sqrt{2K}s)))$.
\item Case $K=0$, then $\gamma(s)=(\pm \tan ^{-1}\left(\sqrt{s^2-1}\right)-\sqrt{s^2-1},-\log(s))$.
\item Case $K<0$, then  $\gamma(s)=(\int^s \sqrt{1-2 K \tan ^2\left(\sqrt{-2 K} t\right)}\, dt,-\log(\cos(\sqrt{-2K}s))$.
\end{enumerate}
\end{corollary}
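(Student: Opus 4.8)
The plan is to specialize the formula \eqref{cyl-sec2} from Theorem \ref{cyl-t2} to the present configuration and then integrate the resulting ODE. First I would set up coordinates as announced: $\mathsf{C}=\partial_z$, $\vec{w}=(0,1,0)$, and $\gamma(s)=(x(s),0,z(s))$ parametrized by arc length, so that $x'^2+z'^2=1$. Since $\vec{w}\perp\mathsf{C}$ we have $\langle\vec{w},\mathsf{C}\rangle=0$ and $\langle\gamma',\mathsf{C}\rangle=z'$. The unit normal ${\bf n}$ of $\gamma$ inside the $xz$-plane, chosen with the orientation $(\gamma',\vec{w},{\bf n})=1$, is ${\bf n}=\gamma'\times\vec{w}=(-z',0,x')$, so $\langle{\bf n},\mathsf{C}\rangle=x'$, and the Frenet curvature satisfies $\gamma''=\kappa{\bf n}$, i.e. $(x'',z'')=\kappa(-z',x')$, whence $\kappa=x'z''-z'x''$ and also $z''=\kappa x'$. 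Substituting into \eqref{cyl-sec2} the condition $K=\mathrm{const}$ becomes
\begin{equation}\label{prop-ode}
z'^2-\kappa\, x'=2K,\qquad x'^2+z'^2=1,\qquad z''=\kappa x'.
\end{equation}

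Second, I would eliminate $\kappa$. From the first equation $\kappa x'=z'^2-2K$, and combined with $z''=\kappa x'$ this gives the clean scalar ODE $z''=z'^2-2K$ for the height function $z=z(s)$. This is an autonomous first-order ODE in $v:=z'$, namely $v'=v^2-2K$, which integrates by separation of variables in the three cases according to the sign of $K$: for $K>0$ one gets $v=z'=-\sqrt{2K}\tanh(\sqrt{2K}s)$ (after fixing the integration constant so that $v(0)=0$, which is the choice leading to the stated form), hence $z(s)=-\log\cosh(\sqrt{2K}s)$; for $K=0$ one gets $v'=v^2$, so $z'=-1/s$ and $z(s)=-\log s$; for $K<0$ one gets $v'=v^2+2|K|$, so $z'=-\sqrt{-2K}\tan(\sqrt{-2K}s)$ and $z(s)=-\log\cos(\sqrt{-2K}s)$. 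In each case $x'$ is then recovered from the arc-length condition $x'=\pm\sqrt{1-z'^2}$, and $x(s)=\int^s\sqrt{1-z'(t)^2}\,dt$; plugging in $z'(t)^2$ from each case gives exactly the three integrands $\sqrt{1-2K\tanh^2(\sqrt{2K}t)}$, $\sqrt{1-1/t^2}=\sqrt{(t^2-1)}/|t|$ (which integrates to the stated $\pm\tan^{-1}\sqrt{s^2-1}-\sqrt{s^2-1}$ form), and $\sqrt{1-2K\tan^2(\sqrt{-2K}t)}$ respectively. I should also check consistency: one must verify that the $\kappa$ obtained from $\kappa=z''/x'$ is indeed finite and that $x'\ne0$ on the relevant interval, i.e. that the ODE does not force $\gamma'$ to become parallel to $\mathsf{C}$ — this is where the ``non-planar'' hypothesis is used, since $\kappa\equiv0$ together with $z'^2=2K$ would force the generating curve to be a straight line and $M$ a plane, the excluded case.

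The main obstacle I anticipate is not the integration of $v'=v^2-2K$, which is routine, but rather organizing the bookkeeping of integration constants and orientation signs so that the output matches the stated normal forms: the reduction collapses two constants (one from translating $s$, which fixes where $z'=0$, and one from translating $z$, absorbed into the additive constant of the logarithm), and the $\pm$ ambiguity in $x'=\pm\sqrt{1-z'^2}$ corresponds to the two reflected copies of the profile curve. A secondary point that deserves a line is justifying that every constant-$K$ non-planar example arises this way and not merely that these curves have constant $K$ — but this is automatic, since \eqref{cyl-sec2} shows $K=\mathrm{const}$ is \emph{equivalent} to \eqref{prop-ode}, so the derivation is an if-and-only-if once the planar solutions are removed. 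I would close by remarking that the explicit parametrizations for the distinguished values $K=\tfrac12$ and $K=-\tfrac12$ claimed in Corollary \ref{cor-34} are read off directly by setting $2K=1$ and $2K=-1$ in cases (1) and (3).
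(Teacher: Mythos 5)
Your proposal is correct and follows essentially the same route as the paper: specialize \eqref{cyl-sec2} to $\mathsf{C}=\partial_z$, $\vec{w}=(0,1,0)$, use ${\bf n}=(-z',0,x')$ and $z''=\kappa x'$ to reduce the constancy of $K$ to the autonomous ODE $z''=z'^2-2K$, integrate case by case, and recover $x$ from $x'^2+z'^2=1$. The only slip is a sign in the intermediate step of the case $K<0$, where the solution is $z'=+\sqrt{-2K}\tan(\sqrt{-2K}\,s)$ rather than $-\sqrt{-2K}\tan(\sqrt{-2K}\,s)$; since the antiderivative you then write down, $z=-\log\cos(\sqrt{-2K}\,s)$, is the correct one, the final parametrization is unaffected.
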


\begin{proof}
Since $\gamma$ is parametrized by arc-length, we know $x'^2+z'^2=1$ and $\gamma'=(x',0,z')$. By the choice of orientation on $\gamma$ given in Thm. \ref{cyl-t2}, the normal vector is ${\bf n}=(-z',0,x')$. Identity \eqref{cyl-sec2} is 
$$z''=z'^2-2K.$$
The solution of this equation depends on the sign of $K$. Up to an additive constant on the functions $x$ and $z$ as well as in the parameter $s$, which it is only a translation of the surface (Rem. \ref{re-23}), we have
\begin{enumerate}
\item $K>0$; then $z(s)=-\log(\cosh(\sqrt{2K}s))$.
\item $K=0$; then $z(s)=-\log(s)$.
\item $K<0$; then  $z(s)=-\log(\cos(\sqrt{-2K}s))$.
\end{enumerate}
The result follows from the identity $x'^2+z'^2=1$.
\end{proof}

In Fig. \ref{fig2} we depict some graphics of the generating curves for different values of $K$. Notice that the domain of $\gamma$ is not $\r$ in general because the root that appears in the integrals that define  the $x$-coordinate of $\gamma$. For example, if $K=0$, then $s\in [1,\infty)$. 

\begin{figure}[hbtp]
\begin{center}
\includegraphics[width=.3\textwidth]{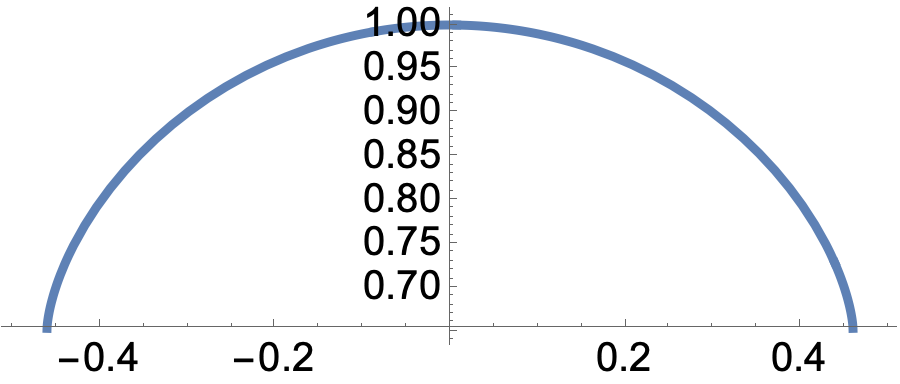}\quad \includegraphics[width=.3\textwidth]{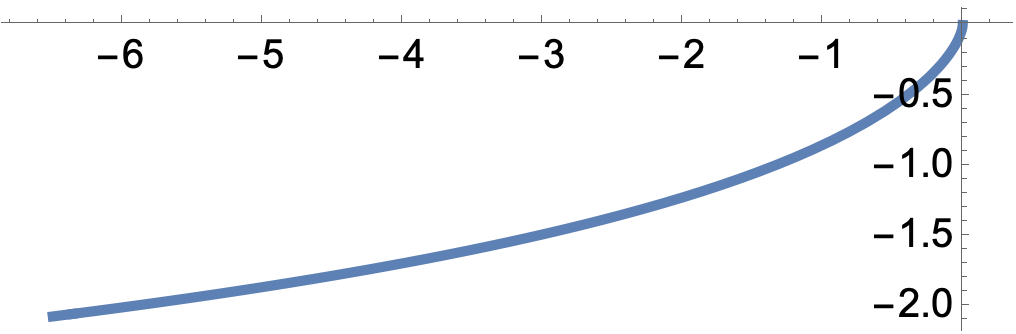}\quad\includegraphics[width=.3\textwidth]{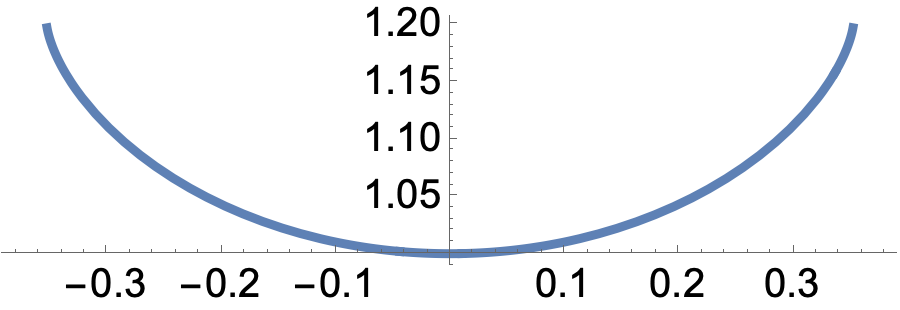}
\end{center}
\caption{Graphics of generating curves of Cor. \ref{c-cyl}: $K=1$ (left), $K=0$ (middle) and $K=-1$ (right). }\label{fig2}
\end{figure}

It is worth to consider the cases $K=  1/2$ and $K=-1/2$. In such a case, the integrals of Cor. \ref{c-cyl} can be explicitly solved.

\begin{corollary}\label{cor-34}
Let $\widetilde{\nabla}$ be the canonical snm-connection determined by $\partial_z$, $M$ a non-planar cylindrical surface whose rulings are orthogonal to $\partial_z$, and $K$ the sectional curvature of $M$ with respect to $\widetilde{\nabla}$.
\begin{enumerate}
\item If $K=\frac12$, then $\gamma(s)=(\tan ^{-1}(\sinh (s)),-\log  \cosh (s) )$.
\item If $K=-\frac12$, then $\gamma(s)=(\sqrt{2} \sin ^{-1}\left(\sqrt{2} \sin (s)\right)-\cot ^{-1}\left(\sqrt{\cot(s)^2-1}\right),-\log  \cos (s) )$.
\end{enumerate} 
\end{corollary}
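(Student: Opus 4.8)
The plan is to specialize Corollary~\ref{c-cyl} to the two values $K=\tfrac12$ and $K=-\tfrac12$ and then carry out the one remaining integration in closed form, which is precisely where these values are special. Recall from the setup there that $\gamma(s)=(x(s),0,z(s))$ is parametrized by arc length, so $x'(s)^2=1-z'(s)^2$, and that the $z$-coordinate has already been determined: for $K=\tfrac12$ one has $\sqrt{2K}=1$, hence $z(s)=-\log\cosh s$; for $K=-\tfrac12$ one has $\sqrt{-2K}=1$, hence $z(s)=-\log\cos s$. Thus everything reduces to integrating $x'(s)=\sqrt{1-z'(s)^2}$.

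First I would treat $K=\tfrac12$. Here $z'(s)=-\tanh s$, so $x'(s)^2=1-\tanh^2 s=\operatorname{sech}^2 s$, that is $x'(s)=\operatorname{sech} s$. The antiderivative of $\operatorname{sech}$ is the Gudermannian function, $\int\operatorname{sech} s\,ds=\arctan(\sinh s)$ (equivalently $2\arctan(\tanh(s/2))$), and dropping the integration constant — which, by Remark~\ref{re-23}, only translates the surface — gives $x(s)=\arctan(\sinh s)$, hence the stated $\gamma$.

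The case $K=-\tfrac12$ is the main obstacle, because the relevant integral is not immediately standard. Now $z'(s)=\tan s$, so $x'(s)=\sqrt{1-\tan^2 s}$, which forces $\cos 2s>0$ (e.g.\ $|s|<\pi/4$). Using $1-\tan^2 s=\cos(2s)/\cos^2 s$ one rewrites $x'(s)=\sqrt{\cos 2s}/\cos s$, and then the key algebraic trick is to split this via $\cos 2s=2\cos^2 s-1$:
$$\frac{\sqrt{\cos 2s}}{\cos s}=\frac{2\cos^2 s-1}{\cos s\,\sqrt{\cos 2s}}=\frac{2\cos s}{\sqrt{\cos 2s}}-\frac{1}{\cos s\,\sqrt{\cos 2s}}.$$
Each summand is then a recognizable derivative: since $\cos 2s=1-2\sin^2 s$, the first term integrates to $\sqrt2\,\arcsin(\sqrt2\sin s)$; and since $\cot^2 s-1=\cos(2s)/\sin^2 s$, a short chain-rule computation shows the second term is the derivative of $-\operatorname{arccot}\sqrt{\cot^2 s-1}$. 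Adding these and again absorbing the constant of integration yields the claimed $x(s)$. I would close by noting that, exactly as in Corollary~\ref{c-cyl}, the genuine domain of $\gamma$ is cut down by $1-z'^2\ge 0$ (here $\cos 2s\ge 0$), and that all additive constants in $x,z$ and shifts in $s$ have been normalized using translation invariance of $K$ (Remark~\ref{re-23}); alternatively, and most quickly, one may simply differentiate the two expressions asserted in the statement and verify directly that their derivatives equal the corresponding $x'(s)$, which makes the proof a routine check.
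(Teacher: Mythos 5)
Your proposal is correct and follows essentially the same route as the paper, which states the corollary without proof as an immediate specialization of Corollary~\ref{c-cyl}: set $K=\pm\tfrac12$, read off $z(s)$, and evaluate the remaining integral $\int\sqrt{1-z'^2}\,ds$ in closed form. Your explicit computation of the $K=-\tfrac12$ integral (the splitting of $\sqrt{\cos 2s}/\cos s$ and the antiderivative $-\operatorname{arccot}\sqrt{\cot^2 s-1}$) checks out and merely supplies details the paper leaves to the reader.
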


For $K=1/2$, the curve $\gamma$ in (1) is called grim reaper. The usual parametrization of the grim reaper is $y(x)=-\log(\cos(x))$ in the $(x,y)$-plane $\r^2$. This  is deduced immediately by letting $x=\tan ^{-1}(\sinh (s))$. The grim reaper is a remarkable curve in the theory of curve-shortening flow  \cite{hal}.

\section{Rotational surfaces }\label{s4}

In this section we study rotational surfaces with constant sectional curvature. A first problem is the relation between the axis $L$ of the surface and the vector field $\mathsf{C}$ that defines the canonical snm-connection. As we said in the Introduction, there is no {\it a priori} a relation between both. However, we prove that they must be parallel.

\begin{theorem}\label{t1}
Let $\widetilde{\nabla}$ be a canonical snm-connection on $\r^3$ determined by the vector field $\mathsf{C}$ and $M$ be  a  rotational  surface in $\r^3$ about an axis $L$. If $M$ has constant sectional curvature $K$, then either  $M$ is any plane   and $K\geq 0$     or $L$ is parallel to $\mathsf{C}$.  
\end{theorem}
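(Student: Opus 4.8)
The plan is to parametrize a general rotational surface and reduce the constancy of $K$ to an analytic identity that forces the axis direction. Let $\vec{a}$ be the unit direction of the axis $L$ and set up cylindrical-type coordinates adapted to $L$, so that the surface is $\psi(s,\theta) = \beta(s) + \big(r(s)\cos\theta\big)\vec{e}_1 + \big(r(s)\sin\theta\big)\vec{e}_2$, where $\{\vec{e}_1,\vec{e}_2,\vec{a}\}$ is an orthonormal frame, the profile curve lives in the half-plane spanned by $\vec{e}_1$ and $\vec{a}$, and the rotation sends $\theta\mapsto\theta+\text{const}$. Write $\mathsf{C} = \cos\alpha\,\vec{a} + \sin\alpha\,\vec{e}_1$ (after rotating the frame about $\vec{a}$ we may assume $\mathsf{C}$ has no $\vec{e}_2$-component at $\theta=0$, but crucially the surface itself is swept around, so $\langle\mathsf{C},\cdot\rangle$ evaluated along the surface picks up $\theta$-dependence unless $\sin\alpha=0$). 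The key computational tool is formula \eqref{help0}: $K = \widetilde{K} + G - \langle\mathsf{C},N\rangle H$. Here $G$ and $H$ depend only on $s$ by rotational symmetry (Levi-Civita quantities are invariant under the rigid motions generating $M$), but $\widetilde{K}$ and $\langle\mathsf{C},N\rangle$ generally depend on $\theta$ through the position of the tangent plane relative to the fixed vector $\mathsf{C}$.

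The main step is to compute $\widetilde{K}$ at a point $\psi(s,\theta)$ using Proposition \ref{pr-21}: if $\{\vec{u},\vec{v}\}$ is an orthonormal basis of the tangent plane, then $2\widetilde{K} = \langle\vec{u},\mathsf{C}\rangle^2 + \langle\vec{v},\mathsf{C}\rangle^2 = |\mathsf{C}|^2 - \langle N,\mathsf{C}\rangle^2 = 1 - \langle N,\mathsf{C}\rangle^2$, where $N$ is the unit normal. Hence \eqref{help0} becomes
\begin{equation*}
K = \tfrac12\big(1 - \langle N,\mathsf{C}\rangle^2\big) + G - \langle \mathsf{C},N\rangle H.
\end{equation*}
Since $K$, $G$, $H$ are functions of $s$ alone, the combination $\tfrac12\langle N,\mathsf{C}\rangle^2 + H\langle N,\mathsf{C}\rangle$ must be a function of $s$ alone. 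Now $N(s,\theta)$ is obtained from a fixed profile normal $N_0(s)$ (in the $\vec{e}_1\vec{a}$-plane) by the rotation $R_\theta$ about $\vec{a}$, so $\langle N(s,\theta),\mathsf{C}\rangle = \langle R_\theta N_0(s),\mathsf{C}\rangle = \langle N_0(s), R_{-\theta}\mathsf{C}\rangle$, which traces a sinusoid in $\theta$ of the form $A(s) + B(s)\cos(\theta+\phi(s))$ with $A(s)$ coming from the $\vec{a}$-component of $\mathsf{C}$ and $B(s)$ proportional to $\sin\alpha$ (the component of $\mathsf{C}$ orthogonal to $\vec{a}$). Writing $f(\theta) = \langle N(s,\theta),\mathsf{C}\rangle$, the requirement is that $\tfrac12 f^2 + Hf$ be $\theta$-independent for each fixed $s$; differentiating in $\theta$ gives $f'(\theta)\big(f(\theta) + H(s)\big) \equiv 0$, so either $f$ is constant in $\theta$ (forcing $B(s)=0$, i.e. $\sin\alpha=0$ once $B\not\equiv 0$ is excluded) or $f(\theta) \equiv -H(s)$ is constant — same conclusion — on every open $s$-interval; a continuity/connectedness argument then propagates $\sin\alpha = 0$ to all of $M$, i.e. $\mathsf{C}\parallel\vec{a}$, unless $N_0(s)$ is identically orthogonal to the $\vec{e}_1$-direction, which means the profile curve is a straight line parallel to $\vec{a}$ at constant distance — a circular cylinder, but then one checks $B(s)\ne 0$ still gives $\theta$-dependence unless $\sin\alpha=0$. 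The genuine exceptional case is when $N_0$ does not actually rotate nontrivially, i.e. when $r(s)$ degenerates and the ``surface'' is planar: if $M$ is a plane, $G=H=0$, $K=\widetilde{K}\ge 0$ by the Corollary after \eqref{help0}, and there is no constraint linking $L$ and $\mathsf{C}$.

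The hard part will be handling the degenerate loci cleanly: points where the profile curve meets the axis ($r(s)=0$), where the frame $\{\vec{e}_1,\vec{e}_2\}$ is singular, and distinguishing the truly planar case from a plane containing $L$ (which is rotational about $L$ but for which the above sinusoid argument must be examined separately, since a plane through $L$ has $N_0$ constant and the ``rotation'' is degenerate). I would dispose of the planar case first by inspection, then assume $M$ is non-planar, work on an open set where $r>0$ and the profile is a regular non-degenerate curve, run the $\theta$-differentiation argument to get $\sin\alpha = 0$ there, and finally use real-analyticity of $M$ away from the axis (the sectional-curvature equation is analytic) together with connectedness of $L$'s direction to conclude $L\parallel\mathsf{C}$ globally. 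A subtlety worth double-checking is that $H$ may vanish identically (minimal rotational surface, a catenoid-type profile), in which case the condition degenerates to $f^2$ being $\theta$-independent, still forcing $B(s)=0$; so no new exceptional families appear.
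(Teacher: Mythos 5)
Your proposal is correct and follows the same overall strategy as the paper's proof: parametrize the rotational surface, invoke \eqref{help0}, and force the $\theta$-dependence of $K$ to cancel, which kills the component of $\mathsf{C}$ transverse to the axis unless the surface is a plane perpendicular to $L$. The execution differs in two pleasant ways. First, you compute $\widetilde{K}$ of the tangent plane directly from Prop.~\ref{pr-21} as $\tfrac12(1-\langle N,\mathsf{C}\rangle^2)$ (legitimate, since the sectional curvature depends only on the plane at the point), whereas the paper recomputes $\widetilde{K}$ from scratch via first- and second-order covariant derivatives; your route is shorter and less error-prone. Second, instead of expanding $K$ in the modes $\cos(nt),\sin(nt)$ for $0\le n\le 2$ and extracting the coefficients $A_2,B_2$ as the paper does, you differentiate $\tfrac12 f^2+Hf$ in $\theta$ to get $f'(f+H)\equiv 0$ with $f$ a sinusoid of amplitude $|z'|\sqrt{a^2+b^2}$; both devices deliver the same dichotomy $z'\equiv 0$ (horizontal plane, $K=\tfrac12(a^2+b^2)\ge 0$) or $a=b=0$ ($L\parallel\mathsf{C}$). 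One small correction: the locus where your sinusoid argument degenerates, namely $N_0$ identically parallel to the axis (equivalently $z'\equiv 0$), is the horizontal plane, not a circular cylinder at constant distance from the axis (a cylinder has $N_0$ radial, so $B\ne 0$ there and the argument applies and forces $a=b=0$). This slip does not affect the conclusion, since you dispose of the planar case separately, but the degenerate case you should single out is exactly the paper's case $z'\equiv 0$.
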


\begin{proof}
After a change of coordinates in $\r^3$, we can suppose that  the axis $L$ of $M$ is the $z$-axis. Let $\mathsf{C}=a\partial_x+b\partial_y+c\partial_z$, for $a,b,c \in \r$. Let also $\gamma\colon I\subset\r\to\r^3$ be the generating curve of $M$ which we can assume that it is contained in the $xz$-plane, namely,  
$$
\gamma(s)= (x(s),0,z(s)), \quad s\in I\subset \r.
$$
We also assume that $\gamma$ is   parametrized by arc-length, that is, $x'^2+z'^2=1$. Let $\kappa=x'z''-z'x''$ be its Frenet  curvature with respect to the induced Levi-Civita connection $\widetilde{\nabla}^0$.  A parametrization of $M$ is 
$$
\psi(s,t)=( x(s)\cos t,x(s)\sin t,z(s)),\quad s\in I, t\in\r.
$$
For the computation of $K$, we calculate all terms of \eqref{help0}. 
The tangent plane of $M$ is spanned by $\{e_1,e_2\}=\{\psi_s,\psi_t\}$, where
\begin{equation*}
\begin{split}
e_1&=( x'\cos t,x'\sin t,z'),\\
e_2&=( -x\sin t,x\cos t,0).
\end{split}
\end{equation*}
 The coefficients of the first fundamental form are  $g_{11}=1$, $g_{12}=0$ and $g_{22}=x^2$.   The unit normal vector of $M$ is 
$$N=(-z'\cos t,-z'\sin t,x').$$
Then it is immediate
\begin{equation}\label{hk}
\begin{split}
\langle \mathsf{C},N\rangle&=-az'\cos t-b z'\sin t+c x',\\
 H&=\frac{1}{2x}(z'+x\kappa),\\
  G&=\frac{z'\kappa}{x}.
 \end{split}
 \end{equation}
 We now calculate $\widetilde{K}$. For this we employ the definition \eqref{sc} taking into account that now the denominator   is $2(g_{11}g_{22}-g_{12}^2)=2x^2$. We begin computing   the covariant derivatives $\widetilde{\nabla}_{e_i}e_j$, $1\leq i,j\leq 2$. From \eqref{sc}, we have 
\begin{eqnarray*}
\widetilde{\nabla}_{e_1}e_1&=&\psi_{ss}+\langle \mathsf{C},e_1\rangle e_1,\\
\widetilde{\nabla}_{e_1}e_2&=&\psi_{st}+\langle \mathsf{C},e_2\rangle e_1 ,\\
\widetilde{\nabla}_{e_2}e_1&=&\psi_{st}+\langle \mathsf{C},e_1\rangle e_2 ,\\
\widetilde{\nabla}_{e_2}e_2&=&\psi_{tt}+\langle \mathsf{C},e_2\rangle e_2.
\end{eqnarray*}

Similarly, the covariant derivatives of second order are calculated. We obtain
\begin{eqnarray*}
\widetilde{\nabla}_{e_1}\widetilde{\nabla}_{e_2}e_2&=&(\widetilde{\nabla}_{e_2}e_2)_s+\langle \mathsf{C},\widetilde{\nabla}_{e_2}e_2\rangle e_1,\\
\widetilde{\nabla}_{e_2}\widetilde{\nabla}_{e_1}e_2&=&(\widetilde{\nabla}_{e_1}e_2)_t+\langle \mathsf{C},\widetilde{\nabla}_{e_1}e_2\rangle e_2 ,\\
\widetilde{\nabla}_{e_2}\widetilde{\nabla}_{e_1}e_1&=&(\widetilde{\nabla}_{e_1}e_1)_t+\langle \mathsf{C},\widetilde{\nabla}_{e_1}e_1\rangle e_2,\\
\widetilde{\nabla}_{e_1}\widetilde{\nabla}_{e_2}e_1&=&(\widetilde{\nabla}_{e_2}e_1)_s+\langle \mathsf{C},\widetilde{\nabla}_{e_2}e_1\rangle e_1.
\end{eqnarray*}
Obviously, $[e_1,e_2]=0$. The curvature $\widetilde{R}$ is
\begin{equation*}
\begin{split}
\widetilde{R}(e_1,e_2,e_2,e_1)&=\langle(\widetilde{\nabla}_{e_2}e_2)_s-(\widetilde{\nabla}_{e_1}e_2)_t,e_1\rangle+\langle \mathsf{C}, \widetilde{\nabla}_{e_2}e_2\rangle\\
&= x^2 (b \cos t-a \sin t)^2,\\
 \widetilde{R}(e_2,e_1,e_1,e_2)&=\langle(\widetilde{\nabla}_{e_1}e_1)_t-(\widetilde{\nabla}_{e_2}e_1)_s,e_2\rangle+x^2\langle \mathsf{C}, \widetilde{\nabla}_{e_1}e_1\rangle\\
&=x^2 \left(x' (a \cos t+b \sin t)+c z'\right)^2.
\end{split}
\end{equation*}
 This gives
\begin{equation*}
\begin{split}
\widetilde{K}&=\frac{ \widetilde{R}(e_1,e_2,e_2, e_1)+\widetilde{R}(e_2,e_1,e_1, e_2) }{2x^2}\\
=&\frac{1}{2}\left((b \cos t-a \sin t)^2+(x' (a \cos t+b \sin t)+c z')^2\right).
\end{split}
\end{equation*}
Finally, using  \eqref{help0}, we obtain
 
\begin{equation*}
\begin{split}
K&= \frac{1}{2}\left((b \cos t-a \sin t)^2+(x' (a \cos t+b \sin t)+c z')^2\right)\\
&+G-(c x'-z' (a \cos t+b \sin t))H.
\end{split}
\end{equation*}

The above expression can be written as a polynomial equation of type 
$$\sum_{n=0}^2(A_n(s)\cos (nt)+B_n(s)\sin (nt))=0.$$ 
Since the functions $\{ \cos(nt),\sin(nt)\}$, $0\leq n\leq 2$, are linearly independent, then all coefficients $A_n$ must vanish identically. The computation of these coefficients yields
\begin{equation*}
\begin{split}
A_2&=\frac{(b^2-a^2)z'^2}{4},\\
B_2&=-\frac12 ab z'^2,\\
A_1&=az'(H+cx'),\\
B_1&=bz'(H+cx'),\\
A_0&=\frac{1}{4} \left(a^2+b^2+2 c^2\right) z'^2+\frac{1}{2} \left(a^2+b^2\right) x'^2-c H x'-K+G.
\end{split}
\end{equation*}
From $A_2=0$ and $B_2=0$, we have the following discussion of cases.
\begin{enumerate}
\item Case $z'=0$ identically. Then $z$ is a constant function and this implies that $M$ is a horizontal plane. In particular, $x'^2=1$. Without loss of generality, we suppose   $x(s)=s$. Since $G=H=0$, equation $A_0=0$ is simply 
$$K=\frac12(a^2+b^2).$$
This proves the result in this case.  
\item Case that $z'(s)\not=0$ at some value $s$. Then $z'\not=0$ around $s$ and $A_2=B_2=0$ implies $a=b=0$. Thus $\mathsf{C}=\pm  \partial_z$ and this proves that $\mathsf{C}$ is parallel to the $z$-axis, which it is the rotation axis of $M$.
\end{enumerate}
\end{proof}

 Once proved Thm. \ref{t1}, we can suppose that the vector field $\mathsf{C}$ is $\partial_z$ and $M$ is a rotational surface about the $z$-axis.  Following the proof of that theorem,  all coefficients $A_n$ and $B_n$, $1\leq n\leq 2$ are trivially $0$ except $A_0$ which it is 
$$K=\frac{z'^2}{2}+G-x'H.$$
Using the value of $G$ and $H$ given in \eqref{hk}, the above equation gives us the expression of $K$ of a rotational surface in terms of its generating curve, namely,     
\begin{equation} \label{Kgf}
K=\frac{1}{2x}((2z'-xx')\kappa+z'(xz'-x')).
\end{equation}

We study when the parenthesis of the right hand-side of \eqref{Kgf} are $0$ identically. 

\begin{proposition} \label{neq}
If $K$ is constant in \eqref{Kgf}, then the functions $2z'-xx'$ and $xz'-x'$ cannot vanish identically in $I$.  
\end{proposition}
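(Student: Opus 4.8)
The plan is to argue by contradiction: assuming that one of the two factors is identically zero, I will use the arc-length relation $x'^2+z'^2=1$ to eliminate $z'$ (and, when it appears, $\kappa$) from \eqref{Kgf}, thereby writing $K$ as an explicit rational function of the single variable $x$, and then derive a contradiction with the constancy of $K$ from the fact that $x$ is necessarily non-constant. Throughout one keeps in mind that $x$ never vanishes on $I$, since \eqref{Kgf} involves $1/x$.

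\textbf{Case $2z'-xx'\equiv 0$.} Here the term containing $\kappa$ in \eqref{Kgf} disappears, so \eqref{Kgf} reduces to the first-order identity $2xK=xz'^2-x'z'$. Substituting $z'=xx'/2$ into $x'^2+z'^2=1$ yields $x'^2=4/(4+x^2)$; plugging this and $z'^2=x^2/(4+x^2)$ back in gives $K=(x^2-2)/\bigl(2(4+x^2)\bigr)$. Since $x'^2=4/(4+x^2)$ never vanishes, $x$ is strictly monotone, so $x(I)$ is a nondegenerate interval; as the function $x\mapsto(x^2-2)/\bigl(2(4+x^2)\bigr)$ has $x$-derivative $6x/(4+x^2)^2\neq 0$ there, $K$ cannot be constant, a contradiction.

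\textbf{Case $xz'-x'\equiv 0$.} Now $x'=xz'$ and the $\kappa$-term survives, so \eqref{Kgf} becomes $2xK=(2z'-xx')\kappa$. Differentiating $x'=xz'$ gives $x''=x'z'+xz''$; substituting this and $x'=xz'$ into $\kappa=x'z''-z'x''$ collapses it to $\kappa=-x'z'^2=-xz'^3$, while $2z'-xx'=(2-x^2)z'$ and $x'^2+z'^2=1$ gives $z'^2=1/(1+x^2)$. Hence $2xK=-x(2-x^2)z'^4$, i.e. $K=(x^2-2)/\bigl(2(1+x^2)^2\bigr)$. Again $x'=xz'$ cannot vanish (otherwise $z'^2=1$ together with $x'=0$ would force $x=0$), so $x$ is strictly monotone, $x(I)$ is a nondegenerate interval, and the $x$-derivative $x(5-x^2)/(1+x^2)^3$ of the right-hand side is not identically zero on it; this contradicts the constancy of $K$.

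The computation is routine in the first case and slightly more involved in the second, where the only real care is needed: one must differentiate the constraint $x'=xz'$ and simplify $\kappa$ correctly before the arc-length relation can be invoked. The two auxiliary observations—that $x\neq 0$ and that $x'\neq 0$ on all of $I$—are precisely what guarantees that $x$ is non-constant, which is the mechanism that closes both cases.
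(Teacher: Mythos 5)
Your proof is correct and follows essentially the same route as the paper: in each case you substitute the vanishing condition into \eqref{Kgf} and the arc-length relation $x'^2+z'^2=1$ to express $K$ as an explicit rational function of the necessarily non-constant quantity $x$, contradicting constancy. The only cosmetic difference is that you close each case via strict monotonicity of $x$ rather than by reading off a polynomial identity forcing $x$ to be constant; incidentally, your relation $2K=(x^2-2)/(1+x^2)^2$ in the second case is the correct form of the paper's displayed equation $2Kx^2=x'^4(x-2)$, which contains a typo.
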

\begin{proof}
\begin{enumerate}
\item Case $2z'-xx'=0$. Then neither $x'$ nor $z'$ can vanish identically. From \eqref{Kgf}, we have
$$
x'^2=\frac{8K}{x^2-2}.
$$
Since  $x'^2+z'^2=1$, then 
$$
x'^2=\frac{4}{4+x^2}.
$$
Combining both equations, we get $8K+(2K-1)x^2+2=0$, then $x$ is a constant function, which it is a contradiction.

\item Case $ xz'-x'=0$.   Since $z'=x'/x$, then $\kappa=   -\frac{x'^3}{x^2}$. Thus  \eqref{Kgf} is 
$$2Kx^2=x'^4(x-2). $$
 On the other hand, it follows $x'^2+z'^2=1$ that
$$
x'^2=\frac{x^2}{1+x^2}.
$$
Combining both equations we obtain that $x=x(s)$ is a constant function. From $ xz'-x'=0$, we have $z$ constant too, which it is a   contradiction by regularity of $\gamma$.
\end{enumerate}
\end{proof}

In the following two results we study the case when the generating curve $\gamma$ of $M$ has constant curvature $\kappa$, that is, $\gamma$ is a straight-line and a circle. First, suppose that $\gamma$ is a straight-line. This implies that $M$ is a conical rotational surface. 

\begin{theorem}\label{t-s} Let $\widetilde{\nabla}$ be a canonical snm-connection and $M$ be a rotational surface about the $z$-axis. Assume that the sectional curvature $K$ of $M$ with respect to $\widetilde{\nabla}$ is constant. If the generating curve of $M$ is a straight-line, then either $M$ is a circular cylinder and $K=1/2$, or $M$ is a horizontal plane and $K=0$.
\end{theorem}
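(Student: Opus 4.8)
The plan is to feed the hypothesis directly into the curvature formula \eqref{Kgf}, which is available because Thm. \ref{t1} lets us assume $\mathsf{C}=\partial_z$ and $M$ rotational about the $z$-axis. Since the generating curve $\gamma(s)=(x(s),0,z(s))$ is parametrized by arc-length, the statement that $\gamma$ is a straight-line is equivalent to $x''\equiv z''\equiv 0$; hence $x(s)=a_1s+b_1$ and $z(s)=a_2s+b_2$ for constants $a_i,b_i$ with $a_1^2+a_2^2=1$, and the Frenet curvature satisfies $\kappa=x'z''-z'x''=0$.

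Substituting $\kappa=0$ into \eqref{Kgf}, on the dense open subset of $I$ where $x\neq 0$ (the affine function $x$ vanishes at most once) one gets
$$K=\frac{z'(xz'-x')}{2x}=\frac{a_2^{\,2}}{2}-\frac{a_1a_2}{2x}.$$
Now $x=a_1s+b_1$ is either a constant (precisely when $a_1=0$) or a genuinely non-constant affine function of $s$; in the latter case $\frac{a_1a_2}{2x}$ is non-constant unless $a_2=0$. Therefore constancy of $K$ forces $a_1a_2=0$. (Alternatively, this can be read off from Prop. \ref{neq}: if both $a_1\neq 0$ and $a_2\neq 0$, then neither bracket $2z'-xx'$ nor $xz'-x'$ vanishes identically, while the displayed expression for $K$ is non-constant, a contradiction.)

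It then remains only to unwind the two cases. If $a_1=0$, then $x\equiv b_1$ is a nonzero constant (nonzero by regularity of $\psi$, since $\psi_t$ has length $|x|$), the radius of $M$ is $|b_1|$, and $a_2=\pm 1$; thus $\gamma$ is a vertical line and $M$ is the circular cylinder of radius $|b_1|$ about the $z$-axis, with $K=a_2^{\,2}/2=1/2$. If instead $a_2=0$, then $z\equiv b_2$, so $\gamma$ is a horizontal line and $M$ lies in the horizontal plane $z=b_2$, for which the formula gives $K=0$; this is consistent with Prop. \ref{pr-21}, as such a plane is orthogonal to $\mathsf{C}$. Since $K$ is translation-invariant (Rem. \ref{re-23}), the particular placement of these lines is irrelevant, and the two cases exhaust all possibilities.

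I do not anticipate a real obstacle: the argument is a single substitution into \eqref{Kgf} followed by an elementary constancy argument. The only point requiring a modicum of care is the behaviour at a possible zero of $x$, i.e.\ where $M$ meets the axis; this is dealt with either by working on the dense open set $\{x\neq 0\}$ and invoking continuity of $K$, or, more cleanly, by appealing to Prop. \ref{neq} to exclude the degenerate brackets from the outset.
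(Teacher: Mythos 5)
Your proof is correct and follows essentially the same route as the paper: both substitute $\kappa=0$ and an affine arc-length parametrization of $\gamma$ into \eqref{Kgf} and force the resulting expression in $s$ to be constant, which yields $a_1a_2=0$ (equivalently $\sin\theta\cos\theta=0$ in the paper's angle notation) and then the same two cases. The only cosmetic difference is that you divide by $x$ while the paper clears denominators to get a polynomial identity in $s$; the content is identical.
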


\begin{proof} We follow the notation of Thm. \ref{t1}. Since $\gamma$ is parametrized by arc-length, then   there is a real number $\theta\in\r$ such that $\gamma$ can be written as 
$$\gamma(s)=(c_1,c_2)+(\cos\theta,\sin\theta)s,\quad c_1,c_2\in\r.$$
Equation \eqref{Kgf} is now
$$2K(s\cos\theta+c_1)-\sin^2\theta(s\cos\theta+c_1)+\sin\theta\cos\theta=0.$$
This  is a polynomial equation on $s$, so all coefficients must vanish. Therefore
\begin{eqnarray*}
(2K-\sin^2\theta)\cos\theta&=&0, \\
(2K-\sin^2\theta)c_1+\sin\theta\cos\theta&=&0.
\end{eqnarray*}
\begin{enumerate}
\item Case $\cos\theta=0$. Then $\gamma(s)=(c_1,\pm s+c_2)$. In particular $c_1>0$. This implies that $M$ is  a circular cylinder of radius $\sqrt{c_1}$. The second equation gives $K=1/2$.

\item Case $\cos\theta\neq 0$. Then  $2K-\sin^2\theta=0$ and the second equation gives $\sin\theta=0$. Thus $\gamma(s)=(\pm s+c_1,c_2)$ and $M$ is a horizontal plane of equation $z=c_2$. Here $K=0$.  
\end{enumerate}
\end{proof}

Finally, we suppose that $\gamma$ is a circle. This implies that $M$ is torus of revolution or a rotational ovaloid. 
\begin{theorem}\label{pr-c}
Let $\widetilde{\nabla}$ be a canonical snm-connection and $M$ be a rotational surface about the $z$-axis. Assume that the sectional curvature $K$ of $M$ with respect to $\widetilde{\nabla}$ is constant. Then the generating curve of $M$ cannot be a circle.  
\end{theorem}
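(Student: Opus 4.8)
The plan is to argue by contradiction: assume $M$ is a rotational surface about the $z$-axis whose generating curve $\gamma$ is a circle and whose sectional curvature $K$ (with respect to the canonical snm-connection determined by $\partial_z$) is constant, and derive a contradiction from the structure equation \eqref{Kgf}. First I would parametrize the circle by arc-length. If the circle has radius $\rho>0$ and center $(c_1,c_2)$ in the $xz$-plane, then $\gamma(s)=(c_1+\rho\cos(s/\rho),\,c_2+\rho\sin(s/\rho))$, so that $x(s)=c_1+\rho\cos(s/\rho)$, $z(s)=c_2+\rho\sin(s/\rho)$, $x'=-\sin(s/\rho)$, $z'=\cos(s/\rho)$, and the Frenet curvature is the constant $\kappa=x'z''-z'x''=1/\rho$. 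Here one must keep $c_1>\rho$ (torus of revolution) or allow $c_1$ to be such that the circle meets the axis (rotational ovaloid, i.e. a sphere when $c_1=0$); in the latter case I would note that the endpoints where $x=0$ are regular points of $M$ only in the sphere subcase, but the algebraic identity below will already fail before worrying about that, since $K$ is assumed constant on an open arc.

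The core step is to substitute these expressions into \eqref{Kgf}, multiply through by $2x$, and obtain
\begin{equation*}
2Kx=(2z'-xx')\kappa+z'(xz'-x').
\end{equation*}
Writing $\sigma=\sin(s/\rho)$, $\chi=\cos(s/\rho)$ and $x=c_1+\rho\chi$, the right-hand side expands to
\begin{equation*}
\tfrac{1}{\rho}\bigl(2\chi+(c_1+\rho\chi)\sigma\bigr)+\chi\bigl((c_1+\rho\chi)\chi+\sigma\bigr),
\end{equation*}
while the left-hand side is $2K(c_1+\rho\chi)$. I would then regard both sides as trigonometric polynomials in the variable $\tau=s/\rho$ — i.e. expand everything into a finite linear combination of $1,\cos\tau,\sin\tau,\cos2\tau,\sin2\tau,\cos3\tau,\sin3\tau$ using product-to-sum formulas — and compare coefficients, exploiting linear independence of these functions on any interval. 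The highest-order terms are the crucial ones: the $\chi^2\sigma$ and $\chi^3$ contributions coming from $\chi(c_1+\rho\chi)\chi$ and similar products produce genuine $\cos3\tau$, $\sin3\tau$, and $\cos2\tau$ terms that cannot be cancelled by anything on the left-hand side (which has only constant and $\cos\tau$ parts) nor by the lower-degree terms. Forcing the coefficient of, say, $\cos3\tau$ (which is a nonzero multiple of $\rho$) to vanish gives $\rho=0$, contradicting that $\gamma$ is a genuine circle.

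The main obstacle is purely organizational rather than conceptual: one must carry out the trigonometric expansion carefully enough to be certain which harmonics appear and that the top harmonic has a coefficient that is a nonzero constant multiple of $\rho$ (or of $\rho$ times a nonzero polynomial in $c_1$), so that its vanishing is impossible. A clean way to package this, which I would use to keep the computation short, is to substitute $w=e^{i\tau}$ and view the identity as an equality of Laurent polynomials in $w$ of degree between $-3$ and $3$; the coefficients of $w^{\pm3}$ are then immediately seen to be nonzero multiples of $\rho$, yielding the contradiction at once. One should also dispatch the degenerate possibility that the "circle" is actually centered on the axis in a way making $x$ non-constant but $\gamma$ still closed — this is already covered since the argument only uses an open arc and never needs global closedness — and note that the cases ruled out in Prop. \ref{neq} (namely $2z'-xx'\equiv0$ or $xz'-x'\equiv0$) are automatically avoided here because a circular $\gamma$ makes those expressions genuine non-constant trigonometric functions.
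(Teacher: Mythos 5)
Your proposal is correct and follows essentially the same route as the paper: parametrize the circle by arc-length, substitute into \eqref{Kgf}, expand the resulting identity as a trigonometric polynomial in $s/\rho$ of degree three, and observe that the third-harmonic coefficient is a nonzero multiple of $\rho$, which cannot vanish. The $w=e^{i\tau}$ packaging and the remarks on Prop.~\ref{neq} are only presentational additions, not a different argument.
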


\begin{proof}
By contradiction, suppose that $\gamma$ is a circle of radius $r>0$. A parametrization of $\gamma$ is 
$$\gamma(s)=(c_1,c_2)+r\left(\cos (s/r),\sin (s/r)\right).$$
Substituting into \eqref{Kgf}, we obtain
\begin{equation*}
\begin{split}
2K(c_1+r\cos(s/r))&-\frac{1}{r}\left(2\cos (s/r)+(c_1+r\cos (s/r))\sin(s/r)\right)\\
&-\cos(s/r)\left((c_1+r\cos (s/r))\cos(s/r)+\sin(s/r)\right)=0.
\end{split}
\end{equation*}
This equation writes as 
$$\sum_{n=0}^3 (A_n\cos(s/r)+B_n\sin(s/r))=0,$$
where $A_n$ and $B_n$ are real constants. Since all $A_n$ and $B_n$ must $0$,  a computation gives $A_3=-\frac{r}{3}$, obtaining a contradiction. 
\end{proof}

The study of solutions of  \eqref{Kgf} is difficult to do in all its generality and Thms. \ref{t-s} and \ref{pr-c} are the first results. An interesting value for $K$ is $1/2$ because this is the curvature of a circular cylinder (for any radius) and that of a plane parallel to  $\partial_z$. If $K=1/2$, then Eq. \eqref{Kgf} is 
\begin{equation}\label{kk}
\kappa=\frac{x'(x+z')}{2z'-xx'}.
\end{equation}
An interesting question is if this equation has a solution for curves starting orthogonally from the rotation axis. If $s=0$ is the time where $\gamma$ intersects the $z$-axis, then we need $x(0)=0$ and $z'(0)=0$.  However, the left hand-side of \eqref{kk} is not defined at $s=0$. This implies that existence of such solutions is not assured. We prove that these solutions, indeed, exist.

\begin{theorem} \label{ort} There exist rotational surfaces with constant sectional curvature $K=1/2$ intersecting orthogonally the rotation axis.
\end{theorem}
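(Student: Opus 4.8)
The plan is to establish existence of a solution of the ordinary differential equation governing the generating curve subject to the initial conditions that place the curve starting orthogonally on the rotation axis. Concretely, writing the generating curve $\gamma(s)=(x(s),0,z(s))$ parametrized by arc-length, the condition $K=\frac12$ is equivalent to \eqref{kk}, i.e. $(2z'-xx')\kappa = x'(x+z')$ with $\kappa = x'z''-z'x''$. The singularity of \eqref{kk} at $s=0$ (where $x(0)=0$, $z'(0)=0$, $x'(0)=1$) means one cannot directly invoke the standard Picard--Lindel\"of theorem, so the strategy is instead to look for a solution as a power series (or, equivalently, as a graph over the axis variable) and prove convergence, or to regularize the system and apply a existence theorem for singular initial value problems.

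The first step I would take is to rewrite the problem in a more tractable form. Since near $s=0$ we have $x'(0)=1\neq 0$, the function $x$ is locally invertible and one may reparametrize, writing $z$ as a function of $x$, say $z=u(x)$ with $u'(0)=0$ (orthogonal intersection) and $u(0)=c$. Then arc-length is no longer the parameter, but the curvature and the equation \eqref{kk} translate into a second-order ODE for $u(x)$ of the form $u'' = F(x,u,u')$, where $F$ is a rational expression in its arguments. The key observation to check is that after clearing denominators, the coefficient of $u''$ does not vanish at $x=0$ when $u'(0)=0$: at $x=0$, $2z'-xx'$ becomes $2u'(0)\cdot(\text{something}) - 0 = 0$, so in fact the leading term still degenerates and one gets a genuinely singular ODE $x\,u'' = (\text{regular})$, of Fuchsian type near $x=0$. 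This is the analytically delicate point.

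The main step is then to handle this Fuchsian-type singularity. The cleanest route is the Briot--Bouquet / Cauchy--Kovalevskaya analytic approach: seek a formal power series solution $u(x)=c+\sum_{k\ge 2}a_k x^k$, substitute into the cleared-denominator equation, and solve recursively for the coefficients $a_k$; the indicial equation should force the vanishing of the $x^1$ term (consistent with $u'(0)=0$) and leave $c$ (and possibly one more parameter) free, with all higher $a_k$ uniquely determined. One then proves the formal series has positive radius of convergence by a majorant (dominating series) argument — this is the standard Briot--Bouquet existence theorem for singular ODEs of the form $x\,w' = G(x,w)$ (or its second-order analogue), which applies precisely when the linearization at the singular point has no nonnegative integer resonance obstruction. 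Having produced a convergent series $u(x)$ on some interval $[0,\varepsilon)$, one defines $\gamma$ by $\gamma(x)=(x,0,u(x))$, checks it is a regular curve (immediate since $x$ is the parameter), reparametrizes by arc-length, verifies $x(0)=0$ and $z'(0)=0$, and forms the rotational surface $\psi(s,t)=(x(s)\cos t, x(s)\sin t, z(s))$; by construction \eqref{Kgf} holds with $K=\frac12$ away from $s=0$, and by continuity the sectional curvature extends to $\frac12$ at the axis point as well (alternatively, one checks directly that the surface is smooth across the axis, e.g.\ that $u$ is even in $x$, so that $\psi$ gives a smooth immersed surface near the pole).

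The main obstacle, as indicated, is the singular initial value problem: one must verify that the resonance/indicial condition for the Briot--Bouquet majorant argument is actually satisfied, i.e.\ that no integer $k\ge 1$ makes the recursion for $a_k$ undetermined or over-determined. Concretely this amounts to computing the linear part of the equation $x\,u''=G(x,u,u')$ at $x=0$ and checking its characteristic exponents; I expect one exponent to be $0$ (giving the free constant $c$) and the other to be a positive non-integer or to be handled by the orthogonality constraint $u'(0)=0$, but this must be checked honestly. A secondary technical point is smoothness of the resulting surface across the rotation axis, which I would dispatch by showing the power series solution contains only even powers of $x$ (so that $x^2 = X^2+Y^2$ is a smooth coordinate and $z$ is a smooth function of $X^2+Y^2$), a symmetry that should follow from the invariance of equation \eqref{kk} under $s\mapsto -s$ together with uniqueness of the series solution with given $c$.
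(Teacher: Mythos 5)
Your diagnosis of the difficulty is right: with $x(0)=0$, $z'(0)=0$ the coefficient $2z'-x$ of the highest derivative vanishes at the initial point, so Picard--Lindel\"of does not apply directly and \eqref{kk} is a genuinely singular initial value problem. But your proposal stops exactly where the work begins. The entire content of the theorem is the resolution of that singularity, and you defer it: you ``expect'' one characteristic exponent to be $0$ and the other to be favourable, and you state that the indicial/resonance condition for the Briot--Bouquet majorant argument ``must be checked honestly'' --- without checking it. Until the recursion for the coefficients $a_k$ is shown to be solvable at every order and the majorant estimate is actually carried out, no existence statement has been proved. (For what it is worth, the linearization is of the form $(2z'-x)z''=\cdots$ with $z'\sim ax$, so the relevant coefficient is $2a-1$; one finds $a=(1\pm\sqrt3)/2$, so the degeneracy you feared does not occur, but this is a computation your argument needs and does not contain.) A secondary overstatement: ``by continuity the sectional curvature extends to $\tfrac12$ at the axis point'' presumes the surface is smooth across the axis, which is the evenness issue you only sketch.

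The paper avoids all of this machinery by an elementary observation you missed: written for the graph $z=z(x)$, equation \eqref{kk} is exact. Both sides are total $x$-derivatives,
\[
\frac{d}{dx}\left((2z'-x)\frac{z'}{\sqrt{1+z'^2}}\right)=\frac{d}{dx}\left(2\sqrt{1+z'^2}+\frac{x^2}{2}\right),
\]
so one integrates once, fixes the constant $c=-2$ from $z'(0)=0$, and after squaring obtains a polynomial relation
\[
\bigl(4x^2-(x^2-4)^2\bigr)z'^2+16xz'+16-(x^2-4)^2=0,
\]
which is a nondegenerate quadratic in $z'$ near $x=0$ and hence solvable as a regular first-order ODE with $z'(0)=0$; standard existence then finishes the proof. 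If you want to salvage your route, either complete the Briot--Bouquet analysis in full, or --- much better --- look for this first integral, which reduces the singular second-order problem to a regular first-order one.
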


\begin{proof}
For our convenience, we work assuming that $\gamma$ is locally a graph $z=z(x)$. Then 
$$x'(s)=\frac{1}{\sqrt{1+z'(x)^2}},\quad z'(s)=\frac{z'(x)}{\sqrt{1+z'(x)^2}},\quad \kappa=\frac{z''(x)}{(1+z'(x)^2)^{3/2}}.$$
Then \eqref{kk} becomes
$$\frac{z''}{(1+z'^2)^{3/2}}=\frac{x\sqrt{1+z'^2}+z'}{(2z'-x)\sqrt{1+z'^2}},$$
or equivalently, 
$$(2z'-x)\frac{z''}{(1+z'^2)^{3/2}}=\frac{x\sqrt{1+z'^2}+z'}{ \sqrt{1+z'^2}}.$$
This equation also writes as 
$$\frac{d}{dx}\left((2z'-x)\frac{z'}{\sqrt{1+z'^2}}\right)=\frac{d}{dx}(2\sqrt{1+z'^2}+\frac{x^2}{2}).$$
Thus there is an integration constant $c\in\r$ such that
$$ (2z'-x)\frac{z'}{\sqrt{1+z'^2}} = 2\sqrt{1+z'^2}+\frac{x^2}{2}+c.$$
If $\gamma$ intersects orthogonally the $z$-axis, then we have $z'(0)=0$. This gives $c=-2$, obtaining a first integration of \eqref{Kgf}, namely, 
$$ (2z'-x)\frac{z'}{\sqrt{1+z'^2}} = 2(\sqrt{1+z'^2}-1)+\frac{x^2}{2}.$$
Squaring both sides of this equation, we get
$$
(4x^2-(x^2-4)^2)z'^2+16xz'+16-(x^2-4)^2=0.
$$
By standard theory of existence of ODE, this equation has a solution with initial value $z'(0)=0$, proving the result. 
\end{proof}

\section*{Acknowledgements}
Rafael L\'opez  is a member of the IMAG and of the Research Group ``Problemas variacionales en geometr\'{\i}a'',  Junta de Andaluc\'{\i}a (FQM 325). This research has been partially supported by MINECO/MICINN/FEDER grant no. PID2020-117868GB-I00,  and by the ``Mar\'{\i}a de Maeztu'' Excellence Unit IMAG, reference CEX2020-001105-M, funded by MCINN/AEI/10.13039/501100011033/ CEX2020-001105-M.


 \end{document}